\newcommand*\dif{\mathop{}\!\mathrm{d}}  
\newcommand*{\orcid}[1]{\href{https://orcid.org/#1}{ORCID: #1}}
\newtheorem{theorem}{Theorem}[section] 
\newtheorem{proposition}{Proposition}[section] 
\newtheorem{lemma}[theorem]{Lemma}
\theoremstyle{definition} 
\newtheorem{example}{Example}[section]
\newtheorem{remark}{Remark}[section] 
\numberwithin{equation}{section} 
\providecommand{\keywords}[1]
{
  \small	
  \textbf{\textit{Keywords: }} #1
}
\providecommand{\MSC}[1]
{
  \small	
  \textit{2020 MSC: } #1   
}
\title{Multidimensional random motions with a natural number of finite velocities}
\author{Fabrizio Cinque$^1$ and Mattia Cintoli\\
        \small Department of Statistical Sciences, Sapienza University of Rome, Italy \\
        \small $^1$fabrizio.cinque@uniroma1.it, \orcid{0000-0002-9981-149X}\\
}
\begin{document}

\maketitle

\begin{abstract}
We present a detailed analysis of random motions moving in higher spaces with a natural number of velocities. In the case of the so-called minimal random dynamics, under some wide assumptions, we show the joint distribution of the position of the motion (both for the inner part and the boundary of the support) and the number of displacements performed with each velocity. Explicit results for cyclic and complete motions are derived. We establish useful relationships between motions moving in different spaces and we derive the form of the distribution of the movements in arbitrary dimension. Finally, we investigate further properties for stochastic motions governed by non-homogeneous Poisson processes.
\end{abstract} \hspace{10pt}

\keywords{Motions In Higher Space; Telegraph Process; Convexity; Partial Differential Equations; Non-homogeneous Poisson Process}

\MSC{Primary 60K99; 60G50}

\section{Introduction}

Since the papers of Goldestein \cite{G1951} and Kac \cite{K1974}, who first studied the connection between random displacements of a particle moving back and forth on the line with stochastic times and hyperbolic partial differential equations, researchers have shown increasing interest in the study of finite-velocity stochastic dynamics. The (initial) analytic approach led to fundamental results such as the explicit derivation of the distribution of the so-called telegraph process \cite{BNO2001, I2001, O1990}, the progenitor of all random motions that later appeared in the literature (also see \cite{C2022a, DcIMZ2013} for further explicit results and Cinque \cite{C2022b} for the description of a reflection principle holding for one-dimensional finite-velocity motions). However, as the number of possible directions increases, the order of the partial differential equation (pde) governing the probability distribution of the absolutely continuous component of the stochastic movement increases as well; in particular, as shown for the planar case by Kolesnik and Turbin \cite{KT1998}, the order of the governing pde coincides with the number of velocities that the motion can undertake. To overcome the weakness of the analytical approach, different ways have been presented to deal with motions in spaces of higher order. One of the first explicit results for multidimensional processes concerned a two-dimensional motion moving with three velocities, see \cite{Dc2002, O2002}, extended to different rules of change of directions in \cite{LO2004}. We also remark the papers of Kolesnik and Orsingher \cite{KO2005}, dealing with a planar motion choosing between the continuum spectrum of the possible directions of the plane, $(\cos \alpha, \sin \alpha), \ \alpha\in [0,2\pi]$, De Gregorio \cite{Dg2010} and Orsingher and De Gregorio \cite{ODg2007}, where are respectively analyzed the corresponding motions on the line and on higher spaces (note that here we only consider motions with a finite number of velocities). Very interesting results concerning motions in arbitrary dimensions were also presented by Samoilenko \cite{S2001} and then furtherly investigated by Lachal \textit{et al.} \cite{LLO2006} and Lachal \cite{L2006}; see \cite{GO2016, K2021, P2012} as well. It is worth recalling that, under some specific assumptions, explicit and fascinating results have been derived; for example in the case of motions moving with orthogonal directions \cite{CO2023, CO2022, OGZ2020, OK1996}. We also reference the papers \cite{DcIM2023, IV2023} for motions driven by geometric counting processes. Along the years, also physicians studied in depth stochastic motions with finite-velocities accomplishing interesting outcomes, see for instance \cite{ML2020, MDMS2020, SBS2020}.

Random evolutions represent a realistic alternative to diffusion processes to suitably model real phenomena in several fields. In geology, to represent the oscillations of the ground \cite{TDcMS2018}, in physics, to describe the random movements of electrons in a conductor or the bacterial dynamics \cite{MADlB2012}, or the movements of particles in gases, and in finance, to model stock prices \cite{KR2013}.

In this paper we present some general results for a wide class of random motions moving with a natural number of finite velocities. After a detailed introduction on the probabilistic description of these stochastic processes, we begin our study focusing on minimal motions, i.e. those one moving with the minimum number of velocities in order to have the state space of the same dimension of the space where they develop. In this case we derive the exact probability in terms of their basic components, generalizing the known results in the current literature. The probabilities concern both the inner part and the boundary of the support of the moving particle. Furthermore, thanks to a one-to-one correspondence between minimal stochastic dynamics, we introduce a \textit{canonical (minimal) motion} to help the analysis and to show explicit results. The provided examples concern different type of motions governed by both Poisson-type processes and geometric counting processes. In Section 3 we derive the distribution of a motion moving with an arbitrary number of velocities by connecting the problem to minimal movements. Finally, in Section 4, we recover the analytic approach to show some characteristics of stochastic dynamics driven by a non-homogeneous Poisson process; in particular, the relationships between the conditional probability of movements in higher order and lower dimensional dynamics.

\subsection{Random motions with a natural number of finite velocities}

Let $\bigl(\Omega, \mathcal{F},\{\mathcal{F}_t\}_{t\ge0}, P\bigr)$ be a filtered probability space and $D \in \mathbb{N}$. In the following we assume that every random object is suitably defined on the above probability space (i.e. if we introduce a stochastic process, this is adapted to the given filtration).
\\

Let $\{W_n\}_{n\in \mathbb{N}_0}$ be a sequence of random variable such that $W_n \ge 0 \ a.s.\ \forall \ n$ and $W_0 = 0\ a.s.$. Let us define $T_n = \sum_{i=0}^n W_i,\ n\in \mathbb{N}_0$, and the corresponding point process $N = \{N(t)\}_{t\ge0}$ such that $N(t) = \max\{n\in\mathbb{N}_0\,:\,\sum_{i=1}^n W_i \le t\}\ \forall\ t$. Unless differently described, we assume $N$ such that $N(t)<\infty \ \forall \ t\ge0\ a.s.$. Set also $V = \{V(t)\}_{t\ge0}$ be a stochastic vector process taking values in a finite state space $\{v_0,\dots, v_M\}\subset \mathbb{R}^D, \ M\in\mathbb{N}$, and such that $P\{V(t+\dif t) \not = V(t)\,|\,N(t,t+\dif t] = 0\} = 0, t\ge0$.
Now, we can introduce the main object of our study, the $D$-dimensional \textit{random motion (with a natural number of finite velocities)} $X = \{X(t)\}_{t\ge0}$ with velocity given by $V$, i.e. moving with the velocities $v_0,\dots, v_M$ and with displacements governed by the random process $N$, 
\begin{equation}\label{definizioneMoto}
X(t) = \int_0^t V(s)\dif s = \sum_{i=0}^{N(t)-1} \bigl(T_{i+1} -T_i\bigr) V(T_i) + \bigl(t-T_{N(t)}\bigr) V(T_{N(t)}), \ \ t\ge0,
\end{equation}
where $V(T_i)$ denotes the random speed after the $i$-th event recorded by $N$, therefore after the potential switch occurring at time $T_i$ (clearly, $T_{i+1}-T_i = W_{i+1}$). The stochastic process $X$ describes the position of a particle moving in a $D$-dimensional (real) space with velocities $v_0, \dots, v_M$ and which can change its velocity only when the process $N$ records a new event.
\\For the sake of brevity we also call $X$ \textit{finite-velocity random motion} (even though this definition suites also for a motion with an infinite number of finite velocities).

\begin{example}[Telegraph process and cyclic motions]\label{esempioCiclico}
If $D=1$, $N$ homogeneous Poisson process with rate $\lambda>0$ and $v_0 = -v_1= c>0$ such that these velocities alternate, i.e. $V(t) = V(0)(-1)^{N(t)}, t\ge0$, then we have the well-known symmetric telegraph process, describing the position of a particle moving back and forth on the line with exponential displacements of average length $c/\lambda$.

In literature, random motions where the velocities change with a deterministic order are usually called \textit{cyclic} motions. If $X$ is a $D$-dimensional motion with $M+1$ velocities, we say that it is cyclic if (without any loss of generality) $P\{V(t+\dif t ) = v_h\,|\,V(t)=v_j,\, N(t,t+\dif t]=1\} = 1$ for $h=j+1$, and $0$ otherwise, $\forall\ j,h$, where $N$ is the point process governing the displacements (and $v_{h+k(M+1)} = v_h\ k\in \mathbb{Z},h=0,\dots,M$). For a complete analysis on this type of motions see \cite{L2006, LLO2006}.
\hfill$\diamond$
\end{example}

\begin{example}[Complete random motions]\label{motoUniforme}
If $P\{V(0)=v_h\}>0$ and $p_{j,h}=P\{V(t+\dif t) = v_h\,|\, V(t)=v_j,\, N(t, t+\dif t]= 1\}  > 0$ for each $j,h = 0,\dots, M$, we call $X$ \textit{complete} random motion. In this case, at each event recorded by the counting process $N$, the particle can switch velocity to any of the available ones (with strictly positive probability). 
\hfill$\diamond$
\end{example}

\begin{example}[Random motion with orthogonal velocities]
Put $D=2$. The motion $(X,Y)$ moving in $\mathbb{R}^2$ with the four orthogonal velocities $v_h = \bigl(\cos(h\pi/2), \sin(h\pi/2) \bigr),\ h=0,1,2,3$ such that $P\{V(t+\dif t) =v_h\,|\, V(t) = v_j,\,N(t,t+\dif t] = 1\} =1/2$ if $j = 0,2$ and $h=1,3$ or $j=1,3$ and $h = 0,2$ (i.e. it always switches ``to a different dimension''). $(X,Y)$ is the so-called \textit{standard orthogonal planar random motion}, which, if $N$ is a non-homogeneous Poisson process, can be expressed as a linear function of two independent and equivalent one-dimensional (non-homogeneous) telegraph processes, see \cite{CO2023}. One can imagine also other rules for the changes of velocity, we refer to \cite{CO2023, CO2022, OGZ2020} for further details.\hfill$\diamond$
\end{example}

The support of the random variable $X(t)$ expands as the time increases and it reads
\begin{equation}\label{supportoMoto}
\text{Supp}\bigl(X(t)\bigr) = \text{Conv}(v_0t,\dots, v_Mt), \ \ t\ge0,
\end{equation}
where Conv$(\cdot)$ denotes the convex hull of the input vectors. 
Therefore, the motion $X$ moves in a convex polytope of dimension
\begin{equation}
\text{dim}\Bigl(\text{Conv}(v_0,\dots,v_M)\Bigr) = \text{rank}\Bigl(v_1-v_0\ \cdots\ v_M-v_0\Bigr)
 = \text{rank}\begin{pmatrix}\begin{array}{l}
1^T\\
\mathrm{V}
\end{array}\end{pmatrix} -1,
\end{equation}
where $\mathrm{V} = (v_0\ \cdots\ v_M)$ is the matrix with the velocities as columns and $1^T$ is a row vector of all ones (with suitable dimension). For $H = 0,\dots, M$, if the particle takes all and only the velocities $v_{i_0}, \dots, v_{i_H}$ in the time interval $[0,t]$, then it is located in the set $\overset{\circ}{\text{Conv}}(v_{i_0}t, \dots, v_{i_H}t)$ (where $\overset{\circ}{S}$ denotes the inner part of the set $S\subset\mathbb{R}^D$ and we assume the notation $\overset{\circ}{\text{Conv}}(v) = \{v\},\, v\in \mathbb{R}^D$).
\\


Our analysis involves the relationships between motions moving in spaces of different orders or with state spaces of different dimensions. From (\ref{definizioneMoto}) it is easy to check that, if $A$ is a $R\times D$-dimensional real matrix, then the motion $X' = \{AX(t)\}_{t\ge0}$ is a $R$-dimensional motion governed by $N$ and with velocities $v_0',\dots,v'_M\in \mathbb{R}^R$ such that $v_h' = Av_h,\ \forall\ h$.

In the following we are using the next lemma from affine geometry theory.

\begin{lemma}\label{lemmaProiezione}
Let $v_0,\dots,v_M\in \mathbb{R}^D$ such that $\text{dim}\Bigl(\text{Conv}(v_0,\dots,v_M)\Bigr) = R$. For $k=0,\dots, M$, there exists the set $I^{R,k}$ of the indexes of the first $R$ linearly independent rows of the matrix $\Bigl[v_h-v_k\Bigr]_{\substack{h=0,\dots,M\\h\not=k}}$ and $I^R = I^{R,k} = I^{R,l} \  \forall\ k,l$.
\\Let $e_1,\dots,e_D$ be the vector of the standard basis of $\mathbb{R}^D$, then the orthogonal projection $p_{R}:\mathbb{R}^D\longrightarrow\mathbb{R}^R,$ $p_{R}(x) = \Bigl[e_i\Bigr]^T_{i\in I^{R}}x$, is such that, with $v_h^{R} = p_{R}(v_h)\ \forall \ h$,
\\$\text{dim}\Bigl(\text{Conv}(v^{R}_0,\dots,v^{R}_M)\Bigr) = R$ and 
$\,\forall \ x^{R}\in\text{Conv}(v^{R}_0,\dots,v^{R}_M) \ \ \exists\ !\ x\in \text{Conv}(v_0,\dots,v_M)  \ s.t.\ p_{R}(x) = x^{R}$.
\end{lemma}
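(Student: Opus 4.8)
The plan is to work with the affine structure of the convex hull. First I would establish that the affine hull $\mathrm{aff}(v_0,\dots,v_M)$ is an $R$-dimensional affine subspace of $\mathbb{R}^D$; this is immediate from $\dim\mathrm{Conv}(v_0,\dots,v_M) = R$, since the convex hull and the affine hull of the same finite set have the same dimension. Translating by $-v_k$, the linear span $L = \mathrm{span}\{v_h - v_k : h \neq k\}$ is an $R$-dimensional linear subspace, and crucially it does not depend on the choice of base point $k$ (a standard fact: $v_h - v_l = (v_h - v_k) - (v_l - v_k) \in L$, so swapping base points does not change the span). From this the existence and base-point-independence of the index set $I^{R} = I^{R,k}$ follows: performing Gaussian elimination on the rows of $\bigl[v_h - v_k\bigr]_{h\neq k}$ to pick the first $R$ linearly independent rows is equivalent to choosing $R$ coordinate functionals $e_i^T$, $i \in I^R$, whose restrictions to $L$ are linearly independent, i.e. whose restrictions form a basis of $L^*$; and since $L$ itself is independent of $k$, the greedy choice of such coordinates is too.

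The second step is to show $p_R$ restricted to $\mathrm{aff}(v_0,\dots,v_M)$ is injective with image of dimension $R$. The map $p_R$ is linear, and its restriction to the linear subspace $L$ is the map $L \to \mathbb{R}^R$ sending $w \mapsto (e_i^T w)_{i\in I^R}$. By construction of $I^R$, this restriction is injective: if $e_i^T w = 0$ for all $i \in I^R$ and $w \in L$, then $w$ is annihilated by a spanning set of $L^*$, hence $w = 0$. Since $\dim L = R = \dim \mathbb{R}^R$, the restriction $p_R|_L$ is a linear isomorphism. Consequently $p_R$ maps the affine subspace $\mathrm{aff}(v_0,\dots,v_M) = v_0 + L$ bijectively onto the affine subspace $v_0^R + p_R(L) = \mathbb{R}^R$ (an affine bijection), which immediately gives $\dim\mathrm{Conv}(v_0^R,\dots,v_M^R) = \dim p_R(\mathrm{Conv}(v_0,\dots,v_M)) = R$, because an affine isomorphism preserves dimensions of convex hulls.

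For the last assertion I would use that an affine bijection carries convex hulls to convex hulls: $p_R\bigl(\mathrm{Conv}(v_0,\dots,v_M)\bigr) = \mathrm{Conv}(v_0^R,\dots,v_M^R)$, and the restriction of $p_R$ to $\mathrm{aff}(v_0,\dots,v_M) \supseteq \mathrm{Conv}(v_0,\dots,v_M)$ is injective by the previous step. Hence for every $x^R \in \mathrm{Conv}(v_0^R,\dots,v_M^R)$ there is at least one preimage $x$ in $\mathrm{Conv}(v_0,\dots,v_M)$ (surjectivity onto the hull) and at most one (injectivity on the ambient affine hull), giving existence and uniqueness.

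The routine verifications (convex hull has the dimension of its affine hull; affine maps send convex hulls onto convex hulls; base-point independence of $L$) are standard facts of affine geometry. The only genuinely delicate point is the claim that the greedy selection $I^{R,k}$ of the first $R$ independent rows is literally the \emph{same} index set for every base point $k$; I expect this to be the main obstacle, and I would handle it by noting that row $h$ of $\bigl[v_h - v_k\bigr]_{h}$ is $e^T\!(v_h - v_k)$ for the coordinate $e$ indexing that row, and linear dependence among a collection of such rows is governed by linear dependence of the corresponding coordinate functionals \emph{restricted to $L$} — a condition manifestly independent of $k$ since $L$ is. Thus the greedy algorithm, scanning coordinates in the fixed order $1,2,\dots,D$, makes the same accept/reject decision at each step regardless of $k$, so $I^{R,k} = I^{R,l}$ for all $k,l$.
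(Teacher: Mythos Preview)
Your proposal is correct and takes a somewhat different route from the paper's own proof. Both arguments identify the same two pieces --- why $I^{R,k}$ does not depend on $k$, and why $p_R$ restricted to the affine hull is a bijection onto $\mathbb{R}^R$ --- but handle them differently.

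For the base-point independence of $I^{R,k}$, you observe that linear dependence among a set of rows of $\bigl[v_h-v_k\bigr]_{h\neq k}$ is equivalent to linear dependence of the corresponding coordinate functionals $e_i^T$ restricted to the common linear subspace $L=\mathrm{span}\{v_h-v_k:h\neq k\}$, and since $L$ does not depend on $k$, the greedy scan over coordinates makes identical accept/reject decisions regardless of $k$. The paper instead argues by a symmetric contradiction: having selected $I^{R,k}$ for one $k$, it notes that the projected hull still has dimension $R$, so the same rows are independent in $\bigl[v_h-v_l\bigr]_{h\neq l}$ for any other $l$; if $I^{R,l}$ were lexicographically earlier, swapping roles would contradict minimality of $I^{R,k}$. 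Your argument is cleaner and explains \emph{why} the equality holds rather than just forcing it by symmetry.

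For the bijectivity of $p_R$ on the convex hull, you use the abstract fact that $p_R|_L$ is a linear isomorphism (injective by construction of $I^R$, surjective by dimension count), hence $p_R$ restricted to $\mathrm{aff}(v_0,\dots,v_M)$ is an affine bijection, and affine bijections carry convex hulls to convex hulls. The paper gives a concrete computational version of the same idea: it writes the two linear systems
\[
x - v_k = \bigl[v_h-v_k\bigr]_{h\neq k}\,a_{(k)}\quad\text{and}\quad x^R - v_k^R = \bigl[v_h^R-v_k^R\bigr]_{h\neq k}\,a_{(k)},
\]
with $a_{(k)}\in[0,1]^M$, and observes they are equivalent (the second system consists of $R$ independent equations that already determine $a_{(k)}$ on the column space, and the remaining $D-R$ equations of the first system are linear combinations of those). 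This barycentric formulation and your affine-isomorphism statement are two packagings of the same linear-algebra content; yours is more conceptual, the paper's more hands-on.

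One minor notational slip: when you write ``row $h$ of $\bigl[v_h-v_k\bigr]_h$ is $e^T(v_h-v_k)$'', the index $h$ is doing double duty (columns are indexed by $h$, rows by coordinates). The intended meaning --- that the row corresponding to coordinate $i$ has entries $e_i^T(v_h-v_k)$, $h\neq k$ --- is clear from the surrounding discussion, but you should tidy the indexing in a final version.
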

See Appendix \ref{dimLemmaProiezione} for the proof. Also note that, if $R=M<D$, i.e. the $R+1$ vectors are affinely independent, then the projected vectors $p_R(v_0),\dots, p_R(v_R)$ are affinely independent as well. Obviously, if $R=D$, $p_R$ is the identity function. 

For our aims, the core of Lemma \ref{lemmaProiezione} is that, for a collection of vectors $v_0,\dots,v_M\in \mathbb{R}^D$ such that $\text{dim}\Bigl(\text{Conv}(v_0,\dots,v_M)\Bigr) = R$, there exists an orthogonal projection, $p_R$, onto a $R$-dimensional space such that to each element of this projection of the convex hull, $x^R\in\text{Conv}(v^R_0,\dots,v^R_M)$, corresponds (one and) only one element of the original convex hull, $x\in \text{Conv}(v_0,\dots,v_M)$.

\section{Minimal random motions}

A random motion in $\mathbb{R}^D$ needs $D+1$ affinely independent velocities in order to have a $D$-dimensional state space. Therefore, we say that the $D$-dimensional stochastic motion $X$, defined as in (\ref{definizioneMoto}), is \textit{minimal} if it moves with $D+1$ affinely independent velocities $v_0,\dots,v_D\in \mathbb{R}^D$.

The support of the position $X(t), t\ge0$, of a minimal random motion is given in (\ref{supportoMoto}) and it can be decomposed as follows 
\begin{equation}\label{supportoMotoMinimale}
\text{Supp}\bigl(X(t)\bigr) =  \bigcup_{H=0}^D\ \bigcup_{i\in \mathcal{C}_{H+1}^{\{0,\dots,D\}}}\overset{\circ}{\text{Conv}}(v_{i_0}t, \dots, v_{i_H}t),
\end{equation}
where $\mathcal{C}_{k}^{S}$ denotes the combinations of $k$ elements from the set $S$, with $0\le k\le |S|<\infty$. Since $X$ is a minimal motion, it lies on each convex hull appearing in (\ref{supportoMotoMinimale}) if and only if it moves with all and only the corresponding velocities in the time interval $[0,t]$.
\\

Let us denote with $T_{(h)} = \{T_{(h)}(t)\}_{t\ge0}$ the stochastic process describing, for each $t\ge0$, the random time that the process $X$ spends moving with velocity $v_h$ in the time interval $[0,t]$, with $ h = 0,\dots,D$. In formula, $T_{(h)}(t)  = \int_0^t \mathds{1}(V(s) = v_h) \dif s, \ \forall\ t,h$. Furthermore, we denote with $T_{(\cdot)} = (T_{(0)},\dots,T_{(D)})$ the vector process describing the times spent by the motion moving with each velocity, and with $T_{(k^-)} = (T_{(0)},\dots,T_{(k-1)},T_{(k+1)},\dots,T_{(D)})$ the vector process describing the time that $X$ spends with each velocity except for the $k$-th one, $k=0,\dots, D$. In the next proposition we express $X$ as an affine function of $T_{(k^-)}$.

\begin{proposition}\label{proposizioneXfunzioneT}
Let $X = \{X(t)\}_ {t\ge0}$ be a finite-velocity random motion in $\mathbb{R}^D$ moving with velocities $v_0,\dots,v_D$. For $k = 0,\dots,D$,
\begin{equation}\label{XfunzioneT}
X(t) = g_k\Bigl(t,T_{(k^-)}(t)\Bigr) = v_kt+\Biggl[v_h - v_k\Biggr]_{\substack{h=0,\dots, D\\h \not = k}} T_{(k^-)}(t),\ \ t\ge0,
\end{equation}
where $\Bigl[v_h - v_k\Bigr]_{\substack{h=0,\dots, D\\h \not = k}} $ denotes the matrix with columns $v_h-v_k, h\not=k$. Furthermore, for fixed $t\ge0$, $g_k$ is bijective $\forall \ k$ if and only if $v_0,\dots,v_D$ are affinely independent (i.e. if and only if $X$ is minimal).
\end{proposition}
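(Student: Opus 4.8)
The plan is to obtain the representation \eqref{XfunzioneT} by a direct integration that exploits the constraint linking the occupation times, and then to reduce the bijectivity claim to the invertibility of a single square matrix.

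For the identity, since at a.e.\ $s\in[0,t]$ the velocity $V(s)$ equals exactly one of $v_0,\dots,v_D$, I would write $V(s)=\sum_{h=0}^D v_h\,\mathds{1}(V(s)=v_h)$ and integrate term by term to get $X(t)=\int_0^t V(s)\dif s=\sum_{h=0}^D v_h\,T_{(h)}(t)$, recalling $T_{(h)}(t)=\int_0^t\mathds{1}(V(s)=v_h)\dif s$. Integrating the identity $\sum_{h=0}^D\mathds{1}(V(s)=v_h)=1$ over $[0,t]$ gives $\sum_{h=0}^D T_{(h)}(t)=t$, hence $T_{(k)}(t)=t-\sum_{h\not=k}T_{(h)}(t)$. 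Substituting this in the previous sum and collecting terms,
\[
X(t)=v_k\Bigl(t-\sum_{h\not=k}T_{(h)}(t)\Bigr)+\sum_{h\not=k}v_h\,T_{(h)}(t)=v_kt+\sum_{h\not=k}(v_h-v_k)\,T_{(h)}(t)=g_k\bigl(T_{(-k)}(t)\bigr),
\]
where $g_k(y)=v_kt+M_k\,y$ and $M_k:=\bigl[v_h-v_k\bigr]_{h\not=k}$ is the $D\times D$ matrix with columns $v_h-v_k$, $h\not=k$ (there are exactly $D$ of them, and $T_{(-k)}(t)\in\mathbb{R}^D$).

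For the bijectivity, fix $t$: then $g_k\colon\mathbb{R}^D\to\mathbb{R}^D$ is affine with linear part $M_k$, and since translations are bijections, $g_k$ is a bijection if and only if $M_k$ is invertible, i.e.\ if and only if the columns $\{v_h-v_k:h\not=k\}$ are linearly independent. It remains to match this with affine independence of $v_0,\dots,v_D$, uniformly in $k$. I would use the symmetric criterion: $v_0,\dots,v_D$ are affinely independent iff the only $(\lambda_0,\dots,\lambda_D)\in\mathbb{R}^{D+1}$ with $\sum_h\lambda_h v_h=0$ and $\sum_h\lambda_h=0$ is $\lambda=0$. For a fixed $k$, the assignment $(\mu_h)_{h\not=k}\mapsto(\lambda_h)_h$ with $\lambda_h=\mu_h$ for $h\not=k$ and $\lambda_k=-\sum_{h\not=k}\mu_h$ is a linear bijection between relations $\sum_{h\not=k}\mu_h(v_h-v_k)=0$ and relations of the above form (indeed then $\sum_h\lambda_h v_h=\sum_{h\not=k}\mu_h(v_h-v_k)$ and $\sum_h\lambda_h=0$ automatically); hence linear independence of $\{v_h-v_k:h\not=k\}$ holds for one $k$ iff it holds for every $k$ iff $v_0,\dots,v_D$ are affinely independent. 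Together with the previous paragraph this shows that $g_k$ is bijective for all $k$ precisely when $X$ is minimal.

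The integration step is pure bookkeeping, resting only on the partition of $[0,t]$ induced by the active velocity; the one point that genuinely needs care is the last equivalence — that the linear independence of the shifted families $\{v_h-v_k:h\not=k\}$ does not depend on the reference index $k$ — which is exactly what legitimates the quantifier ``for all $k$'' in the statement.
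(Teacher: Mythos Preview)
Your proof is correct and follows essentially the same route as the paper: write $X(t)=\sum_h v_h T_{(h)}(t)$, eliminate $T_{(k)}(t)$ via the constraint $\sum_h T_{(h)}(t)=t$, and then identify bijectivity of $g_k$ with invertibility of $[v_h-v_k]_{h\neq k}$, hence with affine independence. You simply supply more detail on the last equivalence (the independence of the criterion from the choice of $k$), which the paper states without elaboration.
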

Hereafter we are omitting the direct dependence of $g_k$ (and the similar functions) from the time variable $t$ since we are always working with fixed $t\ge0$; thus, we are more briefly writing, for instance, $g_k\bigl(X(t)\bigr)$.
\begin{proof}
Fix $t\ge0$. By definition $\sum_{h=0}^D T_{(h)}(t) = t\ a.s.$ and $X(t) = \sum_{h=0}^D v_h\, T_{(h)}(t)$, therefore, for each $k=0,\dots,D$, $X(t) = \sum_{h\not = k} (v_h-v_k) T_{(h)}(t) + v_k t$ that in matrix form is (\ref{XfunzioneT}).
\\The matrix $\Bigl[v_h - v_k\Bigr]_{h\not = k} $ is invertible $\forall \ k$ if and only if all the differences $v_h-v_k,h\not = k$ are linearly independent $\forall \ k$ and thus if and only if the velocities $v_0,\dots,v_D$ are affinely independent.
\end{proof}

\begin{remark}\label{remarktXfunzioneT}
Another useful representation of a finite-velocity random motion $X$ is, with $t\ge0$,
\begin{equation}\label{tXfunzioneT}
\begin{pmatrix}\begin{array}{l}t\\ X(t)\end{array}\end{pmatrix}  =
g\Bigl(T_{(\cdot)}(t)\Bigr) = \begin{pmatrix}\begin{array}{l}1^T\\ \mathrm{V} \end{array}\end{pmatrix}  T_{(\cdot)}(t)
\end{equation}
and $g$ is bijective if and only if $X$ is minimal (indeed, $\begin{pmatrix}\begin{array}{l}1^T\\ \mathrm{V} \end{array}\end{pmatrix} $ is invertible if and only if the velocities, columns of $\mathrm{V}$, are affinely independent). In this case we write $T_{(\cdot)}(t) = g^{-1}\bigl(t, X(t)\bigr) = \Bigl(g_{\cdot, h}^{-1} \bigl(t, X(t)\bigr) \Bigr)_{h=0,\dots,D} = \begin{pmatrix}\begin{array}{l}1^T\\ \mathrm{V} \end{array}\end{pmatrix}^{-1} \begin{pmatrix}\begin{array}{c}t\\ X(t)\end{array}\end{pmatrix}$. Now, for $k=0,\dots,D$, we write the inverse of (\ref{XfunzioneT}) as
\begin{equation}\label{inversatXfunzioneT}
T_{(k^-)}(t)= g^{-1}_k\bigl(X(t)\bigr) = \Bigl(g_{k, h}^{-1} \bigl(X(t)\bigr) \Bigr)_{h\not =k} =  \Bigl(g_{\cdot, h}^{-1} \bigl(t, X(t)\bigr) \Bigr)_{h\not=k} = \Bigl[v_h-v_k\Bigr]_{h\not = k}^{-1} \bigl(X(t)-v_kt\bigr).
\end{equation}
Notation (\ref{inversatXfunzioneT}) is going to be useful below. Clearly, $ t - \sum_{h\not=k} g_{\cdot, h}^{-1} \bigl(t,X(t)\bigr) = t - \sum_{h\not=k} T_{(h)}(t) = T_{(k)}(t)= g_{\cdot, k}^{-1} \bigl(t,X(t)\bigr)$.
\hfill$\diamond$
\end{remark}

\begin{theorem}\label{teoremaX'funzioneX}
Let $X = \{X(t)\}_ {t\ge0}$ be a minimal random motion moving with velocities $v_0,\dots,v_D\in\mathbb{R}^D$ and whose displacements are governed by a point process $N$. Let $X' = \{X'(t)\}_{t\ge0}$ be a random motion with velocities $v_0',\dots,v_D'$ whose displacements are governed by a point process $N' \stackrel{d}{=}N$ and with the same rule of changes of velocity of $X$. Then, for $t\ge0$,
\begin{equation}
X'(t) \stackrel{d}{=} f\bigl( X(t) \bigr) = \mathrm{V}' \begin{pmatrix}\begin{array}{l}1^T\\ \mathrm{V} \end{array}\end{pmatrix}^{-1}\begin{pmatrix}\begin{array}{c}t\\ X(t) \end{array}\end{pmatrix},
\end{equation}
where $\mathrm{V}' = (v_0',\dots, v_D')$. Furthermore, $f$ is bijective if and only if $X'$ is minimal.
\end{theorem}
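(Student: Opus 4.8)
The plan is to reduce everything to the identity representation of Remark~\ref{remarktXfunzioneT}, which expresses $(t, X(t))$ as the bijective linear image $g(T_{(\cdot)}(t)) = \begin{pmatrix}\begin{array}{l}1^T\\ \mathrm{V}\end{array}\end{pmatrix} T_{(\cdot)}(t)$, and does the same for $X'$ with $g'(T'_{(\cdot)}(t)) = \begin{pmatrix}\begin{array}{l}1^T\\ \mathrm{V}'\end{array}\end{pmatrix} T'_{(\cdot)}(t)$. The first step is the key observation that, because $N' \stackrel{d}{=} N$ and $X'$ has the same rule of changes of velocity as $X$ (same initial distribution on $\{v_0,\dots,v_D\}$ and same switch probabilities $p_{j,h}$), the pair of processes $(N', V')$ recording displacement epochs and the indices of the velocities chosen has the same law as $(N, V)$. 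Consequently the vector of occupation times indexed by velocity label has the same law: $T'_{(\cdot)}(t) \stackrel{d}{=} T_{(\cdot)}(t)$ for each fixed $t \ge 0$. This is really the heart of the matter — one must be careful that "same rule of changes of velocity" is interpreted as a statement about the labels $0,\dots,D$, not about the geometric vectors, so that the occupation-time vector (indexed by label) is a measurable functional of $(N,V)$ that is identical in form for both motions.

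Given that, the second step is pure linear algebra. Since $X$ is minimal, $\begin{pmatrix}\begin{array}{l}1^T\\ \mathrm{V}\end{array}\end{pmatrix}$ is invertible, so from \eqref{tXfunzioneT} we can solve $T_{(\cdot)}(t) = \begin{pmatrix}\begin{array}{l}1^T\\ \mathrm{V}\end{array}\end{pmatrix}^{-1}\begin{pmatrix}\begin{array}{c}t\\ X(t)\end{array}\end{pmatrix}$ as in Remark~\ref{remarktXfunzioneT}. On the $X'$ side we do not need invertibility: directly $X'(t) = \mathrm{V}'\,T'_{(\cdot)}(t)$ since $X'(t) = \sum_{h=0}^D v_h'\,T'_{(h)}(t)$. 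Now chain these together:
\[
X'(t) = \mathrm{V}'\,T'_{(\cdot)}(t) \stackrel{d}{=} \mathrm{V}'\,T_{(\cdot)}(t) = \mathrm{V}' \begin{pmatrix}\begin{array}{l}1^T\\ \mathrm{V}\end{array}\end{pmatrix}^{-1}\begin{pmatrix}\begin{array}{c}t\\ X(t)\end{array}\end{pmatrix} = f\bigl(X(t)\bigr),
\]
where the distributional equality is applying the (deterministic, linear) map $y \mapsto \mathrm{V}' y$ to the equality in law from the first step. This gives the displayed formula.

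For the final sentence, I would argue as follows: $f$ is the composition of the bijection $(t,x) \mapsto \begin{pmatrix}\begin{array}{l}1^T\\ \mathrm{V}\end{array}\end{pmatrix}^{-1}\begin{pmatrix}\begin{array}{c}t\\ x\end{array}\end{pmatrix}$ from the graph $\{(t,x) : x \in \mathrm{Supp}(X(t))\}$ onto the simplex of occupation-time vectors (this is Remark~\ref{remarktXfunzioneT}), followed by the linear map $\mathrm{V}'$. Restricting to a fixed time $t$, the first map is a bijection onto its image, so $f(\cdot)$ at time $t$ is injective (hence bijective onto its image $\mathrm{Supp}(X'(t))$) precisely when $\mathrm{V}'$ is injective on the relevant affine subspace, i.e. when the differences $v_h' - v_k'$ are linearly independent, i.e. when $v_0',\dots,v_D'$ are affinely independent — which by the definition in Section~2 is exactly the condition that $X'$ is minimal. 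Equivalently one invokes Proposition~\ref{proposizioneXfunzioneT} applied to $X'$: $g'_k$ is bijective iff $X'$ is minimal, and $f$ differs from $g'_k \circ (\text{fixed bijection})$ only by relabelling, so the two bijectivity statements coincide. The main obstacle, as noted, is purely in phrasing the first step correctly — making precise that "same point process law" plus "same velocity-switching rule" forces equality in law of the label-indexed occupation-time vector; once that is granted the rest is a one-line linear-algebra chase.
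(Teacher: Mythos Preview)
Your proof is correct and follows essentially the same route as the paper: establish $T'_{(\cdot)}(t)\stackrel{d}{=}T_{(\cdot)}(t)$ from $N'\stackrel{d}{=}N$ and the common switching rule, write $X'(t)=\mathrm{V}'T'_{(\cdot)}(t)$, and substitute the inverse from Remark~\ref{remarktXfunzioneT}; for bijectivity both you and the paper reduce to Proposition~\ref{proposizioneXfunzioneT} applied to $X'$. Your write-up is in fact more careful than the paper's in spelling out that the equality in law is for the full occupation-time vector (not just marginals) and that ``same rule'' refers to velocity labels.
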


\begin{proof}
Fix $t\ge0$. Since the changes of velocity of $X$ and $X'$ have the same rule and $N(t)\stackrel{d}{=}N'(t)\ \forall\ t$, then $T_{(h)}(t)\stackrel{d}{=}T'_{(h)}(t)\ \forall\ h,t$. For $k=0,\dots,D$, $X'(t) = \mathrm{V}'T'_{(\cdot)}(t)\stackrel{d}{=} \mathrm{V}' \begin{pmatrix}\begin{array}{l}1^T\\ \mathrm{V} \end{array}\end{pmatrix}^{-1}\begin{pmatrix}\begin{array}{c}t\\ X(t) \end{array}\end{pmatrix}$. 
Now, for Proposition \ref{proposizioneXfunzioneT}, $\forall\ k$, $X(t)$ is in bijective correspondence with $T_{(k^-)}(t)$ and then with $T'_{(k^-)}(t)$ (in distribution). Therefore $X(t)$ is in bijective correspondence with $X'(t)$ if and only if $X'(t)$ is in bijective correspondence with $T'_{(k^-)}(t)$, so if $X'$ minimal.
\end{proof}

\begin{remark}[Canonical motion]\label{remarkDefMotoCanonico}
Theorem \ref{teoremaX'funzioneX} states that all minimal random motions in $\mathbb{R}^D$, with displacements and changes of directions governed by the same probabilistic rules, are in bijective correspondence (in distribution). Therefore, it is useful to introduce a minimal motion $X =\{X(t)\}_{t\ge0}$ moving with the \textit{canonical} velocities of $\mathbb{R}^D$, $e_0 = 0,e_1,\dots,e_D$, where $e_h$ is the $h$-th vector of the standard basis of $\mathbb{R}^D$. At time $t\ge0$, the support of the position $X(t)$ is given by the convex set $\{x\in \mathbb{R}^D\,:\,x\ge0,\, \sum_{i=1}^D x_i\le t\}$. Put $t\ge0$ and $\mathrm{E} = (e_0 \ \cdots\ e_D) = (0 \ I_D)$, the matrix having the canonical velocities as columns. In view of Remark \ref{remarktXfunzioneT}, the canonical motion can be expressed as $\begin{pmatrix}\begin{array}{c}t\\ X(t) \end{array}\end{pmatrix} = \begin{pmatrix}\begin{array}{l}1^T\\ \mathrm{E} \end{array}\end{pmatrix} T_{(\cdot)}(t)$ and
\begin{equation}\label{inversatXfunzioneTcanonico}
T_{(\cdot)}(t) = g^{-1}\bigl(t, X(t)\bigr) = \begin{pmatrix}\begin{array}{l l}1 & -1^T\\ 0 & I_D \end{array}\end{pmatrix} \begin{pmatrix}\begin{array}{c}t\\ X(t) \end{array}\end{pmatrix} =  \begin{pmatrix}\begin{array}{c}t-\sum_{j=1}^D X_j(t)\\ X_1(t)\\ \cdot \\ X_D(t) \end{array}\end{pmatrix}.
\end{equation}
Keeping in mind notation (\ref{inversatXfunzioneT}), the inverse functions $g_k^{-1},\ k=0,\dots,D$, are given by (\ref{inversatXfunzioneTcanonico}) excluding the $(k+1)$-th term (which concerns the time $T_{(k)}(t)$).

Finally, if $Y$ is a random motion with affinely independent velocities $v_0,\dots,v_D$, under the hypothesis of Theorem \ref{teoremaX'funzioneX}, we can write
\begin{equation}\label{X'funzioneXcanonico}
Y(t) \stackrel{d}{=} v_0 t + \Bigl[v_h - v_0\Bigr]_{h=1,\dots,D}X(t),\ \ \ t\ge0.
\end{equation}
\end{remark}

\subsection{Probability law of the position of minimal motions}\label{sezioneProbabilitaMotiMinimali}

In order to study the probability law of the position $X(t),\, t\ge0$, of a minimal random motion in $\mathbb{R}^D$ we need to make some hypothesis on the probabilistic mechanisms of the process, i.e. on the velocity process $V$ and the associated point process $N$, which governs the displacements. 

(H1) The changes of velocity, which can only occur when the point process $N$ records an event, depend only from the previously selected velocities (but not from the moment of the switches nor the time spent with each velocity). Therefore, we assume that, $t\ge0$,
\begin{align}
P&\{V(t+\dif t) = v_h\,|\, N(t, t+\dif t] = 1,\, \mathcal{F}_t\}\nonumber\\
& = P\{V(t+\dif t) = v_h\,|\, N(t, t+\dif t] = 1,\, V(T_j), j=0,\dots, N(t)\}, \label{ipotesiH1}
\end{align}
where $T_0=0\ a.s.$ and $T_1,\dots,T_{N(t)}$ are the arrival times of the process $N$ (see (\ref{definizioneMoto})). Note that if $N$ is Markovian and the conditional event in (\ref{ipotesiH1}) can be reduced to $\{ N(t, t+\dif t] = 1,\, V\bigl(T_{N(t)}\bigr)\}$, then both $V$ and $(X, V)$ are Markovian (see for instance \cite{D1984}).
\\

Now, we define, for $t\ge0,\ h=0,\dots, D$, the processes $N_h(t) = \big|\{0\le j\le N(t)\,:\, V(T_{j}) = v_h\} \big|$ counting the number of displacements with velocity $v_h$ in the time interval $[0,t]$. Clearly $\sum_{h=0}^D N_h(t) = N(t) +1\ a.s.$ (because the displacements are one more than the switches since the initial movement). Let us also define the random vector $C_{N(t)+1} = \bigl(C_{N(t)+1, 0}, \dots, C_{N(t)+1,D}\bigr)\in \mathbb{N}_0^{D+1}$ which provides the allocation of the selected velocities in the $N(t)+1$ displacements.
\\For $t\ge0$ and $n_0,\dots, n_D$, we have the following relationship $\Big\{ \bigcap_{h=0}^D N_h(t) = n_h\Big\} \iff \{ N(t) = n_0+\dots+n_D -1,\, C_{n_0+\dots+n_D} = (n_0, \dots, n_D)\}$.

\begin{example}[Complete random motions]\label{remark2MotoCompleto}
Consider the motion in Example \ref{motoUniforme} with $p_{j,h} = p_h = P\{V(0) = v_h\}>0,\ j,h = 0,\dots, D$, so that the probability of selecting a velocity does not depend on the current velocity. Then, $\forall \ t, \ C_{N(t)+1}\sim Multinomial \bigl(N(t)+1, p = (p_0, \dots, p_D)\bigr)$.
\hfill$\diamond$
\end{example}
 
(H2) The time of the displacements along different velocities are independent, i.e. the waiting times $\{W_n\}_{n\in \mathbb{N}_0}$  (see (\ref{definizioneMoto})) are independent if they concern different velocities. For each $h= 0,\dots,D$, let $\big\{W_n^{(h)}\big\}_{n\in \mathbb{N}}$ be the sequence of the times related to the displacements with velocity $v_h$. In details, $W_n^{(h)}$ denotes the time of the $n$-th movement with velocity $v_h$ and $W_n^{(h)}, W_m^{(k)}$ are independent if $h\not=k, \forall\ m,n$. Let also $N_{(h)} =\{N_{(h)}(s) \}_{s\ge0}$ be the associated point process, i.e. such that $N_{(h)}(s) = \max\{n\,:\, \sum_{i=0}^n W_i^{(h)}\le s\},\ \forall \ s$. Then $N_{(0)},\dots, N_{(D)}$ are independent counting processes.

From hypothesis (H1) we have that the random times $W_n^{(h)}$ are independent of the allocation of the velocities among the steps, i.e. for $A\subset \mathbb{R}$,
\begin{equation}\label{fintaIpotesi3}
P\{W_m^{(h)}\in A,\, V(T_n) = v_h,\, C_{n,h} = m\} = P\{W_m^{(h)}\in A\} P \{V(T_n) = v_h,\, C_{n,h} = m\}
\end{equation}
for each $m\le n\in \mathbb{N},\, h = 0,\dots, D$. In words, the first member of (\ref{fintaIpotesi3}) concerns the $m$-th displacement with speed $v_h$ to be in $A$, also requiring that this is the $n$-th movement of the motion.
\\

Below we use the following notation: for any suitable function $g$ and any suitable absolutely continuous random variable $X$ with probability density $f_X$, we write $P\{X\in \dif\, g(x)\} = f_X\bigl(g(x)\bigr) |J_g(x)|\dif x$, where $J_g$ is the Jacobian matrix of $g$.

\begin{theorem}\label{teoremaLeggeIntersezioniMinimale}
Let $X$ be a minimal finite-velocity random motion in $\mathbb{R}^D$ satisfying (H1)-(H2). For $t\ge0,\,x\in \overset{\circ}{\text{Supp}}\bigl(X(t)\bigr),\ n_0,\dots,n_D\in \mathbb{N}$ and $k=0,\dots, D$,
\begin{align}
& P\bigg\{X(t)\in \dif x,\,\bigcap_{h=0}^D\{N_h(t) = n_h\} ,\, V(t) = v_k\bigg\}\label{leggeIntersezioniMinimale}\\
& = \prod_{\substack{h=0\\h\not=k}}^D f_{\sum_{j=1}^{n_h} W_j^{(h)}}\Bigl(g_{k,h}^{-1}(x)\Bigr) \dif x \,\Big|\bigl[v_h-v_k\bigr]_{h\not =k}\Big|^{-1} \, P\Bigg\{N_{(k)}\biggl(t-\sum_{\substack{h=0\\h\not = k}}^D g_{k,h}^{-1}(x)\biggr) = n_k-1\Bigg\}\nonumber\\
&\ \ \ \times P\big\{C_{n_0+\dots+n_D} = (n_0,\dots,n_D), V(t) = v_k\big\},\nonumber
\end{align}
where $g_k^{-1}$ is given in (\ref{inversatXfunzioneT}).
\end{theorem}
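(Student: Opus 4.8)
The strategy is to condition on the full trajectory of velocities and arrival times, then change variables from $T_{(\cdot)}(t)$ to $\bigl(t, X(t)\bigr)$ using the bijection $g$ of Remark~\ref{remarktXfunzioneT}. First I would partition the event $\bigl\{X(t)\in\dif x,\ \bigcap_h\{N_h(t)=n_h\},\ V(t)=v_k\bigr\}$ according to the ordered sequence of velocities chosen at the displacements; by the reformulation $\bigl\{\bigcap_h N_h(t)=n_h\bigr\}\iff\{N(t)=n_0+\dots+n_D-1,\ C_{n_0+\dots+n_D}=(n_0,\dots,n_D)\}$ this amounts to summing over all allocations compatible with $(n_0,\dots,n_D)$ and with terminal velocity $v_k$. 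On such an event, $N_k(t)=n_k$ means exactly $n_k$ displacements (the last one still in progress at $t$) were made with $v_k$, and $n_h$ completed displacements were made with each $v_h$, $h\ne k$; so $T_{(h)}(t)=\sum_{j=1}^{n_h}W_j^{(h)}$ for $h\ne k$, while $T_{(k)}(t)=t-\sum_{h\ne k}T_{(h)}(t)$ is a residual lifetime, which forces $N_{(k)}\bigl(T_{(k)}(t)\bigr)=n_k-1$.

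Next I would invoke hypotheses (H1)--(H2): by (H2) the families $\{W_j^{(h)}\}_j$ for distinct $h$ are independent, so the joint density of $\bigl(T_{(h)}(t)\bigr)_{h\ne k}$ factorizes as $\prod_{h\ne k}f_{\sum_{j=1}^{n_h}W_j^{(h)}}$; by (H1), in the form (\ref{fintaIpotesi3}), the waiting-time variables are independent of the allocation vector $C$ and of the terminal velocity, so the combinatorial factor $P\{C_{n_0+\dots+n_D}=(n_0,\dots,n_D),\,V(t)=v_k\}$ splits off cleanly. The residual-time constraint on velocity $v_k$ contributes the factor $P\bigl\{N_{(k)}\bigl(t-\sum_{h\ne k}T_{(h)}(t)\bigr)=n_k-1\bigr\}$, again independent of the rest by (H2). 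Assembling, the joint law of $\bigl(T_{(h)}(t)\bigr)_{h\ne k}$ on the relevant event is
\[
\prod_{\substack{h=0\\h\ne k}}^{D} f_{\sum_{j=1}^{n_h}W_j^{(h)}}\bigl(\tau_h\bigr)\,\dif\tau_h \cdot P\Bigl\{N_{(k)}\bigl(t-\textstyle\sum_{h\ne k}\tau_h\bigr)=n_k-1\Bigr\}\cdot P\bigl\{C_{n_0+\dots+n_D}=(n_0,\dots,n_D),\,V(t)=v_k\bigr\}.
\]

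Finally I would push this forward through the bijection $g_k$ of Proposition~\ref{proposizioneXfunzioneT}: since $X$ is minimal, $T_{(-k)}(t)=g_k^{-1}\bigl(X(t)\bigr)=\bigl[v_h-v_k\bigr]_{h\ne k}^{-1}\bigl(X(t)-v_kt\bigr)$, so substituting $\tau_h=g_{k,h}^{-1}(x)$ and using the Jacobian rule stated just before the theorem introduces the constant factor $\bigl|\bigl[v_h-v_k\bigr]_{h\ne k}\bigr|^{-1}$; the residual term becomes $P\bigl\{N_{(k)}\bigl(t-\sum_{h\ne k}g_{k,h}^{-1}(x)\bigr)=n_k-1\bigr\}$, which is precisely (\ref{leggeIntersezioniMinimale}). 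The condition $x\in\overset{\circ}{\text{Supp}}\bigl(X(t)\bigr)$ guarantees $g_{k,h}^{-1}(x)>0$ for all $h\ne k$ and $t-\sum_{h\ne k}g_{k,h}^{-1}(x)>0$, so all the arguments are in the proper ranges. I expect the main obstacle to be the bookkeeping in the first step: carefully justifying that, on the event that fixes the allocation and the terminal velocity, the time spent with each $v_h$, $h\ne k$, is exactly the sum of the first $n_h$ i.i.d.\ waiting times for that velocity (a completed renewal sum), whereas $v_k$ contributes a partially elapsed $n_k$-th interval — and that (H1) in the precise form (\ref{fintaIpotesi3}) is strong enough to decouple these renewal sums from the discrete allocation mechanism without any hidden dependence through the number of switches.
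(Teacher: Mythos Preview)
Your proposal is correct and follows essentially the same route as the paper's proof: rewrite the event in terms of $T_{(h)}(t)=\sum_{j=1}^{n_h}W_j^{(h)}$ for $h\ne k$ and the residual condition $N_{(k)}\bigl(T_{(k)}(t)\bigr)=n_k-1$, split off the allocation factor via (H1)/(\ref{fintaIpotesi3}), factorize the waiting-time densities via (H2), and then change variables through $g_k^{-1}$ with the resulting Jacobian $\bigl|[v_h-v_k]_{h\ne k}\bigr|^{-1}$. The only cosmetic difference is that the paper performs the change of variables $X(t)\mapsto T_{(-k)}(t)$ first and factorizes afterwards, whereas you reverse that order; the logic and ingredients are identical.
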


Theorem \ref{teoremaLeggeIntersezioniMinimale} provides a general formula for the distribution of the position of the minimal motion at time $t$ joint with the number of displacements for each direction in the interval $[0,t]$ and the current direction (meaning at time $t$).

\begin{proof}
Fix $k = 0,\dots,D$ and $t\ge0$.
\begin{align}
&P\bigg\{X(t)\in \dif x,\,\bigcap_{h=0}^D\{N_h(t) = n_h\} ,\, V(t) = v_k\bigg\}\nonumber\\
& = P\bigg\{X(t)\in \dif x,\,\bigcap_{h=0}^D\{N_h(t) = n_h\} ,\, \sum_{h=0}^D T_{(h)}(t) = t,\, V(t) = v_k\bigg\}\nonumber \\
& = P\bigg\{X(t)\in \dif x,\,\bigcap_{\substack{h=0\\h\not = k}}^D\Big\{T_{(h)}(t) = \sum_{j=1}^{n_h} W_j^{(h)}\Big\} ,\, N_{(k)}\bigl(T_{(k)}(t)\bigr) = n_k-1,\label{passaggio1teorema}\\
& \ \ \ \ \ \ \ \ C_{n_0+\dots,n_D} =(n_0,\dots,n_D),\, V(t) = v_k\bigg\} \nonumber\\
& =P\bigg\{T_{(k^-)}(t)\in \dif\, g_k^{-1}(x),\,\bigcap_{h\not = k}\Big\{T_{(h)}(t) = \sum_{j=1}^{n_h} W_j^{(h)}\Big\} ,\, N_{(k)}\bigl(T_{(k)}(t)\bigr) = n_k-1,\label{passaggio2teorema}\\
& \ \ \ \ \ \ \ \ C_{n_0+\dots,n_D} =(n_0,\dots,n_D),\, V(t) = v_k\bigg\} \nonumber\\
& =P\bigg\{ \Biggl(\sum_{j=1}^{n_h} W_j^{(h)}\Biggr)_{h\not = k}\in \dif\, g_{k}^{-1}(x) ,\, N_{(k)}\biggl(t-\sum_{h\not = k} g_{k,h}^{-1}(x)\biggr) = n_k-1\bigg\}\label{passaggio3teorema}\\
& \ \ \ \times P\big\{C_{n_0+\dots,n_D} =(n_0,\dots,n_D),\, V(t) = v_k\big\} \nonumber
\end{align}
Step (\ref{passaggio1teorema}) follows by considering that, in the time interval $[0,t]$, the motion performs $n_h$ steps with velocity $v_h,\ \forall\ h$, and it has $V(t) = v_k$  if and only if the total amount of time spent with $v_h$ is given by the sum of $n_h$ waiting times $W_j^{(h)}$, for $h\not = k$ and if the point process $N_{(k)}$ is waiting for the $n_k$-th event at the time $T_{(k)}(t)$ (because $V(t) = v_k$, so the motion completed $n_k-1$ displacements with velocity $v_k$ and it is performing the $n_k$-th). Finally, the event $C_{n_0+\dots,n_D} = (n_0,\dots, n_D)$ pertains the randomness in the allocation of the velocities.
\\Step (\ref{passaggio2teorema}) and (\ref{passaggio3teorema}) respectively follow by considering equation (\ref{XfunzioneT}) and the independence of the waiting times $W_{j}^{(h)}$ from the allocation of the displacements, $\forall\ j,h$, see (\ref{fintaIpotesi3}). 
\\Note that (\ref{passaggio3teorema}) holds for a random motion where the hypothesis (H2) (concerning the independence of the displacements with different velocities) is not assumed. By taking into account (H2) and using (\ref{inversatXfunzioneT}),  (\ref{passaggio3teorema}) coincides with (\ref{leggeIntersezioniMinimale}).
\end{proof}

We point out that if $x\longrightarrow \bar{x}\in\partial \text{Supp}\bigl(X(t)\bigr)$, then for at least one $l=0,\dots, D$, $T_{(l)}(t)\longrightarrow 0 $. Therefore, in (\ref{leggeIntersezioniMinimale}) either $g_{k,h}^{-1}(x) = T_{(h)}(t) \longrightarrow 0 $ for $h\not=k$ or $t-\sum_{h\not=k} g_{k,h}^{-1}(x) = T_{(k)}(t)\longrightarrow 0 $. In light of this observation, for $x$ that tends to the boundary of the support, probability (\ref{leggeIntersezioniMinimale}) goes to $0$ if the density function related to the time $T_{(l)}(t)$ (representing the time which converges to $0$) tends to $0$. See Examples \ref{remarkLeggeMotoCiclico} and \ref{esempioLeggeMotoCanonicoCompleto} for more details.

\begin{remark}[Canonical motion]\label{remarkLeggeMotoCanonico}
If $X$ is a canonical minimal random motion in $\mathbb{R}^D$ (see Remark \ref{remarkDefMotoCanonico}) then, with (\ref{inversatXfunzioneTcanonico}) at hand, we immediately have the corresponding probability (\ref{leggeIntersezioniMinimale}) by considering $g_{\cdot,h}^{-1}(x) = \begin{cases}\begin{array}{l l} t-\sum_{i=1}^Dx_i, &\text{if }h =0,\\x_h,& \text{if }h\not =0, \end{array}\end{cases}$  and the Jacobian determinant is equal to $1$.
\hfill$\diamond$
\end{remark}

\begin{example}[Cyclic motions]\label{remarkLeggeMotoCiclico}
Let $X$ be a cyclic (see Example \ref{esempioCiclico}) minimal motion with velocities $v_0,\dots,v_D\in \mathbb{R}^D$ and let $v_{h+k(D+1)} = v_h, \ h=0,\dots,D,\, k\in \mathbb{Z}$. Let also $N$ be the point process governing the displacements of $X$, then for fixed $t\ge0$, the knowledge of $N(t)$ and $V(t)$ is sufficient to determine $N_h(t)\ \forall\ h$. Let also $P\{V(0) = v_h\} = p_h>0$ and $p_{h+k(D+1)} = p_h,\ \forall \ h$ and $k\in \mathbb{Z}$.
\\Let $n\in \mathbb{N}$ and $k=0,\dots, D$. With $j=1\dots,D$, if the motion performs $n(D+1)+j$ displacements in $[0,t]$, i.e. $N(t) = n(D+1)+j-1$, and $V(t) = v_k$, then $n+1$ displacements occur for each of the velocities $v_{k-j+1}, \dots, v_{k}$ (the $(n+1)$-th displacement with velocity $v_k$ is not complete) and $n$ displacements occur for each of the other velocities $v_{k+1},\dots, v_{k+1+D-j}$. On the other hand, if $j=0$ each velocity is taken $n$ times. Hence, for $x\in \overset{\circ}{\text{Supp}}\bigl(X(t)\bigr)$,
\begin{align}
P&\{X(t)\in \dif x\} = \sum_{j=0}^D \sum_{n=1}^\infty \sum_{k = 0}^D P\{X(t)\in \dif x,\, N(t) =n(D+1)+j-1,\, V(t) = v_k\}\nonumber\\
& = \sum_{j=0}^D \sum_{k = 0}^D \sum_{n=1}^\infty P\bigg\{X(t)\in \dif x,\,\bigcap_{h=k-j+1}^{k}\{N_h(t) = n+1\},\,\bigcap_{h=k+1}^{k+1+D-j}\{N_h(t) = n\} ,\, V(t) = v_k\bigg\},\label{leggeCiclicoMinimale}\
\end{align}
where the probability appearing in (\ref{leggeCiclicoMinimale}) can be derived from (\ref{leggeIntersezioniMinimale}) (note that, for $j=0$, $P\{C_{n(D+1)} = (n,\dots, n),\, V(t) = v_k\} = P\{V(0) = v_{k+1}\}= p_{k+1}$ and, for $j=1,\dots,D$,  with  $n_h = n+1$ if $ h = k-j+1,\dots,k$ and $n_h=n$ if $h = k+1,\dots,k+1+D-j$, $P\{C_{n(D+1)-j} = (n_0,\dots, n_{D}),\, V(t) = v_k\}= P\{V(0) = v_{k-j+1}\}=  p_{k-j+1}$). 
\\
Now, we assume $X$ being a cyclic canonical motion and we derive the probabilities appearing in (\ref{leggeCiclicoMinimale}). For $t\ge0$, in light of Theorem \ref{teoremaLeggeIntersezioniMinimale} and Remark \ref{remarkLeggeMotoCanonico}, by setting $x_0 = t-\sum_{i=1}^D x_i$, we readily arrive at the following distributions, for $x \in \overset{\circ}{\text{Supp}}\bigl(X(t)\bigr) = \{x\in \mathbb{R}^D\,:\,x>0,\, \sum_{i=1}^D x_i< t\}$ and $k=0,\dots, D$. For $j=1,\dots,D,$\begin{align}
P&\bigg\{X(t)\in \dif x,\,\bigcup_{n=1}^\infty N(t) =n(D+1)+j-1,\,V(t) = e_k \bigg\} /\dif x\nonumber\\
& =  \sum_{n=1}^\infty P\{V(0) = v_{k-j+1}\} \nonumber\\
& \ \ \ \times  \Biggl(\prod_{h=k-j+1}^{k-1} f_{\sum_{i=1}^{n+1}W_i^{(h)}}(x_h)\Biggr)\, P\{N_{(k)}(x_k) = n\} \, \Biggl(\prod_{h=k+1}^{k+1+D-j}f_{\sum_{i=1}^{n}W_i^{(h)}}(x_h) \Biggr)\label{formulaCiclicoMinimaleGenerale}
\end{align}
and, for $j=0$,
\begin{align}
P\bigg\{&X(t)\in \dif x,\,\bigcup_{n=1}^\infty N(t) =n(D+1)-1,\,V(t) = e_k\bigg\} /\dif x\nonumber\\
& =  \sum_{n=1}^\infty  P\{V(0) = v_{k+1}\} \Biggl(\prod_{\substack{h=0\\h\not=k}}^{D} f_{\sum_{i=1}^{n}W_i^{(h)}}(x_h)\Biggr)\, P\{N_{(k)}(x_k) = n-1\}.\label{formulaCiclicoMinimaleGeneraleCaso0}
\end{align}

We point out that thanks to relationship (\ref{X'funzioneXcanonico}), from probabilities (\ref{formulaCiclicoMinimaleGenerale}) and (\ref{formulaCiclicoMinimaleGeneraleCaso0}) we immediately obtain the distribution of the position of any $D$-dimensional cyclic minimal random motion, $Y$, moving with velocities $v_0,\dots,v_D$ and governed by a Poisson-type process $N_Y\stackrel{d}{=}N$.

We now present the explicit results for two different types of point processes for $N$.
\\
\\
$a)$ \textit{Homogeneous Poisson-type process.} Assume $N$ a Poisson-type process such that $W_i^{(h)}\sim Exp(\lambda_h), \ i\in \mathbb{N}, h=0,\dots,D$. Then formula (\ref{formulaCiclicoMinimaleGenerale}) turns into
\begin{align}
P&\bigg\{X(t)\in \dif x,\,\bigcup_{n=1}^\infty N(t) =n(D+1)+j-1,\,V(t) = e_k\bigg\} /\dif x\nonumber\\
& =\sum_{n=1}^\infty  p_{k-j+1} \Biggl(\prod_{h=k-j+1}^{k-1} \frac{\lambda_h^{n+1}\,x_h^{n}\,e^{-\lambda_h x_h}}{n!}\Biggr)\, \frac{e^{-\lambda_k x_k}(\lambda_k x_k)^{n}}{n!} \, \Biggl(\prod_{h=k+1}^{k+1+D-j}\frac{\lambda_h^{n}\,x_h^{n-1}\,e^{-\lambda_h x_h}}{(n-1)!} \Biggr) \nonumber\\
& =e^{-\sum_{h=0}^D\lambda_h x_h} \Biggl(\prod_{h=0}^{D} \lambda_h \Biggr)  p_{k-j+1} x_k\Biggl(\prod_{h=k-j+1}^{k-1} \lambda_h x_h \Biggr)\, \tilde{I}_{j, D+1}\Biggl((D+1)\sqrt[D+1]{\prod_{h=0}^{D}\lambda_hx_h}\Biggr),\label{leggeCicloCiclicoMinimale}
\end{align}
where $\tilde{I}_{\alpha,\nu}(z) = \sum_{n=0}^\infty \Bigl(\frac{z}{\nu}\Bigr)^{n\nu} \frac{1}{n!^{\nu-\alpha}(n+1)!^{\alpha}}$, with $0\le\alpha\le\nu,\ z\in \mathbb{C}$, is a Bessel-type function. Similarly, formula (\ref{formulaCiclicoMinimaleGeneraleCaso0}) reads
\begin{align}
P\bigg\{&X(t)\in \dif x,\,\bigcup_{n=1}^\infty N(t) =n(D+1)-1,\,V(t) = e_k\bigg\} /\dif x\nonumber\\
& =e^{-\sum_{h=0}^D\lambda_h x_h} p_{k+1} \Biggl(\prod_{\substack{h=0\\h\not = k}}^{D} \lambda_h \Biggr)\, \tilde{I}_{0, D+1}\Biggl((D+1)\sqrt[D+1]{\prod_{h=0}^{D}\lambda_hx_h}\Biggr).\label{leggeCicloCiclicoMinimaleCaso0}
\end{align}

Note that, if $x\longrightarrow \bar{x}\in \partial \text{Supp}\bigl(X(t)\bigr)$, then  $\exists \ l=0,\dots, D$ such that the total time spent with velocity $l$ goes to $0$, meaning that $T_{(l)}(t) = x_l\longrightarrow0$. With this at hand, we observe that probability (\ref{leggeCicloCiclicoMinimaleCaso0}) reduces to $e^{-\sum_{h\not\in I_0}\lambda_h x_h} p_{k+1} \Bigl(\prod_{h\not = k} \lambda_h \Bigr)$, where $I_0\subset \{0,\dots,D\}$ denotes the indexes of the times going to $0$. Hence, $\forall\ k$, distribution (\ref{leggeCicloCiclicoMinimaleCaso0}) never converges to $0$ for $x$ tending to the boundary of the support. Intuitively, this follows because the probability concerns the event where every velocity is taken exactly $n$ times, with $n\ge1$, and therefore it considers also the case $n=1$, where the random times have exponential density function which is right continuous and strictly positive in $0$.
\\
On the other hand, (\ref{leggeCicloCiclicoMinimale}) can converge to $0$. 
In fact, (for fixed $j$) if $D+1-j$ times $T_{(h)}(t) = x_h$ tends to $0$, then for each $k$, at least one of these times appears in $x_k\Bigl(\prod_{h=k-j+1}^{k-1} \lambda_h x_h \Bigr)$ leading it to $0$. This follows because the event in the probability does not include the case where all the velocity whose time converges to $0$ are taken just once.
\\
\\
$b)$ \textit{Geometric counting process.} Assume  that $N_{(h)}, h=0,\dots,D$, are independent geometric counting processes with parameter $\lambda_h>0$; therefore the waiting times $W_i^{(h)}, W_j^{(k)}$ are independent $\forall\ h\not=k$ and they are dependent for $h=k$ and $i\not=j$. In particular, if $M$ is a geometric counting process with parameter $\lambda>0$, then
\begin{equation}\label{leggeProcessoGeometrico}
P\{M(s+t) - M(s) =n\} =\frac{1}{1+\lambda t}\Biggl(\frac{\lambda t}{1+\lambda t}\Biggr)^n ,\ \ \ \ s,t\ge0,\ n\in\mathbb{N}_0,
\end{equation}
and its arrival times have a modified Pareto (Type I) distribution, that is
\begin{equation}\label{intertempiProcessoGeometrico}
P\{T_n \in \dif t\} =\frac{n\lambda}{(1+\lambda t)^2}\Biggl(\frac{\lambda t}{1+\lambda t}\Biggr)^{n-1}\dif t ,\ \ \ \ t\ge0,\ n\in\mathbb{N}.
\end{equation}
We refer to \cite{DcIM2023, IV2023} for further details about geometric counting processes for random motions and to \cite{G1997} for a complete overview on mixed Poisson processes.
\\Now, in light of (\ref{leggeProcessoGeometrico}) and (\ref{intertempiProcessoGeometrico}), formula (\ref{formulaCiclicoMinimaleGenerale}) turns into
\begin{align}
&P\bigg\{X(t)\in \dif x,\,\bigcup_{n=1}^\infty N(t) =n(D+1)+j-1,\,V(t) = e_k\bigg\} /\dif x\nonumber\\
& = \frac{p_{k-j+1}}{1+\lambda_k x_k}\Biggl(\prod_{h=k-j+1}^{k} \frac{\lambda_hx_h}{1+\lambda_hx_h}\Biggr)\Biggl(\prod_{\substack{h=0\\h\not =k}}^{D} \frac{\lambda_h}{(1+\lambda_hx_h)^2}\Biggr)\sum_{n=1}^\infty n^{D+1-j}(n+1)^{j-1}\prod_{h=0}^{D}\Biggl(\frac{\lambda_hx_h}{1+\lambda_hx_h} \Biggr)^{n-1}. \label{leggeCicloCiclicoMinimaleGeometrico}
\end{align}
Similarly, formula (\ref{formulaCiclicoMinimaleGeneraleCaso0}) reads
\begin{align}
P\bigg\{&X(t)\in \dif x,\,\bigcup_{n=1}^\infty N(t) =n(D+1)-1,\,V(t) = e_k\bigg\} /\dif x\nonumber\\
& = \frac{p_{k+1}}{1+\lambda_k x_k}\Biggl(\prod_{\substack{h=0\\h\not =k}}^{D} \frac{\lambda_h}{(1+\lambda_hx_h)^2}\Biggr)\sum_{n=0}^\infty (n+1)^{D}\prod_{h=0}^{D}\Biggl(\frac{\lambda_hx_h}{1+\lambda_hx_h} \Biggr)^{n}.\label{leggeCicloCiclicoMinimaleGeometricoCaso0}
\end{align}
Finally, for $x\longrightarrow \bar{x}\in \partial \text{Supp}\bigl(X(t)\bigr)$ similar considerations to those in point $(a)$ apply.

We point out that from the above formulas it is easy to obtain several results appeared in previous papers such as \cite{DcIM2023, L2006, LLO2006, IV2023, O2002}. 
For instance, by considering $\lambda_h = \lambda>0\ \forall\ h$ and $k = j-1$, (\ref{leggeCicloCiclicoMinimale}) coincides with the distribution in Section 4.4 of \cite{L2006}; with $D=1$, from formulas (\ref{leggeCicloCiclicoMinimaleGeometrico}) and (\ref{leggeCicloCiclicoMinimaleGeometricoCaso0}) it is straightforward to derive the elegant distributions in Theorem 1 of \cite{DcIM2023} (consider $k=j-1=0$ in  (\ref{leggeCicloCiclicoMinimaleGeometrico}) and $k = D=1$ in (\ref{leggeCicloCiclicoMinimaleGeometricoCaso0})).

For further details about the cyclic motions we refer to \cite{L2006} and \cite{LLO2006}. 
\hfill$\diamond$
\end{example}

\begin{example}[Complete motions]\label{esempioLeggeMotoCanonicoCompleto}
Let $X$ be a $D$-dimensional complete canonical (minimal) random motion with $P\{V(0) = e_h\}=P\{V(t+\dif t) = e_h\,|\, V(t)=e_j,\, N(t, t+\dif t] = 1\} = p_{h}> 0$ for each $j,h = 0,\dots, D,$ and governed by a homogeneous Poisson process with rate $\lambda>0$. Now, with $t\ge0$, in light of Remark \ref{remarkLeggeMotoCanonico}, by setting $x_0 = t-\sum_{j=0}^D x_j$ and using Theorem \ref{teoremaLeggeIntersezioniMinimale} we readily arrive at, for $x \in \overset{\circ}{\text{Supp}}\bigl(X(t)\bigr) $, integers $n_0,\dots,n_D\ge 1$ and $k=0,\dots,D$,
\begin{align}
& P\bigg\{X(t)\in \dif x,\,\bigcap_{h=0}^D\{N_h(t) = n_h\} ,\, V(t) = e_k\bigg\}/\dif x\nonumber\\
& =\Biggl( \prod_{\substack{h=0\\h\not=k}}^D \frac{\lambda^{n_h}\,x_h^{n_h-1}\,e^{-\lambda x_h}}{(n_h-1)!}\Biggr)\, \frac{e^{-\lambda x_k}(\lambda x_k)^{n_k-1}}{(n_k-1)!} \, \binom{n_0+\dots+n_D-1}{n_0,\dots,n_{k-1},n_{k}-1,n_{k+1},\dots, n_D} \prod_{h=0}^D p_h^{n_h}\nonumber\\
& =\frac{e^{-\lambda t}}{\lambda}\,\biggl(\,\sum_{h=0}^D n_h -1\biggr)! \,n_k\prod_{h=0}^D \frac{(\lambda p_h)^{n_h}\, x_h^{n_h-1}}{(n_h-1)!\,n_h!}.\label{leggeIntersezioniMinimaleCompleto}
\end{align}
Then it is straightforward to see that,
\begin{equation}
P\bigg\{X(t)\in \dif x,\,\bigcap_{h=0}^D\{N_h(t) = n_h\}\bigg\} /\dif x= \frac{e^{-\lambda t}}{\lambda}\,\biggl(\,\sum_{h=0}^D n_h \biggr)! \,\prod_{h=0}^D \frac{(\lambda p_h)^{n_h}\, x_h^{n_h-1}}{(n_h-1)!\,n_h!}.\label{leggeIntersezioniMinimaleCompleto2}
\end{equation}
Finally, 
\begin{align}
P\{X(t)\in \dif x\}/\dif x &= \frac{e^{-\lambda t}}{\lambda} \sum_{n_0,\dots,n_D \ge1}\, \biggl(\,\sum_{h=0}^D n_h \biggr)! \,\prod_{h=0}^D \frac{(\lambda p_h)^{n_h}\, x_h^{n_h-1}}{(n_h-1)!\,n_h!} \label{leggeMotoMinimaleCompleto}\\
& = \frac{e^{-\lambda t}}{\lambda}\sum_{m_0,\dots,m_D \ge0}  \,\biggl(\,\sum_{h=0}^D m_h + D+1 \biggr)! \,\prod_{h=0}^D \frac{(\lambda p_h)^{m_h+1}\, x_h^{m_h}}{m_h!\,(m_h+1)!}\nonumber \\
& = \frac{e^{-\lambda t}}{\lambda} \prod_{h=0}^D \sqrt{\lambda p_h} \,\sum_{m_0,\dots,m_D \ge0} \,\int_0^\infty e^{-w}   w^{D+1}\,\prod_{h=0}^D \frac{(\lambda p_h)^{m_h+\frac{1}{2}}\, (x_h w)^{m_h}}{m_h!\,(m_h+1)!}   \dif w\nonumber\\
 & =  \frac{e^{-\lambda t}}{\lambda} \prod_{h=0}^D \sqrt{\frac{\lambda p_h}{x_h}} \int_0^\infty  e^{-w}   w^{\frac{D+1}{2}} \prod_{h=0}^D I_1\Bigl(2\sqrt{w\lambda p_h x_h}\Bigr)\dif w,\label{leggeMotoMinimaleCompletoIntegrale}
\end{align}
with $I_1(z) = \sum_{n=0}^\infty \Bigl(\frac{z}{2}\Bigr)^{2n+1} \frac{1}{n!\,(n+1)!}$ is the modified Bessel function of order $1$, for $ z\in \mathbb{C}$. Note that, if $x\longrightarrow \bar{x}\in \partial \text{Supp}\bigl(X(t)\bigr)$, then $\exists \ l=0,\dots, D$ such that $x_l\longrightarrow0$. For instance, by assuming that there is just one $l$ satisfying the given condition, formula (\ref{leggeMotoMinimaleCompletoIntegrale}) turns into
$$ P\{X(t)\in \dif x\}/\dif x  \longrightarrow p_l e^{-\lambda t}\prod_{\substack{h=0\\h\not=l}}^D \sqrt{\frac{\lambda p_h}{x_h}} \int_0^\infty  e^{-w}   w^{\frac{D}{2}+1} \prod_{\substack{h=0\\h\not=l}}^D I_1\Bigl(2\sqrt{w\lambda p_h x_h}\Bigr)\dif w.$$
Similarly to the cyclic case (see point $(a)$, limit behavior of (\ref{formulaCiclicoMinimaleGeneraleCaso0})), probability (\ref{leggeMotoMinimaleCompletoIntegrale}) never converges to $0$ because we are including the event where each velocity is chosen once. This can be easily observed from formula (\ref{leggeIntersezioniMinimaleCompleto}) by putting $n_l= 1$.

It is interesting to observe that,
\begin{align}
\int_{\text{Supp}\bigl(X(t)\bigr)} P\{X(t)\in \dif x\}& = \frac{e^{-\lambda t}}{\lambda} \sum_{n_0,\dots,n_D \ge1}\, \biggl(\,\sum_{h=0}^D n_h \biggr)! \,\prod_{h=0}^D \frac{(\lambda p_h)^{n_h}}{(n_h-1)!\,n_h!}\nonumber \\
&\ \ \ \times  \int_0^{t} x_1^{n_1-1} \dif x_1\int_0^{t-x_1} x_2^{n_2-1}\dif x_2\dots \int_{0}^{t-x_1-\dots-x_{D-2}} x_{D-1}^{n_{D-1}-1}\dif x_{D-1} \nonumber\\
& \ \ \ \times\int_0^{t-x_1-\dots-x_{D-1}} x_D^{n_D-1} \biggl(t-\sum_{j=1}^D x_j\biggr)^{n_0-1} \dif x_D\nonumber\\
& = \frac{e^{-\lambda t}}{\lambda t} \sum_{n_0,\dots,n_D \ge1}\,\biggl(\,\sum_{h=0}^D n_h \biggr) \prod_{h=0}^D \frac{(\lambda t p_h)^{n_h}}{n_h!}\nonumber\\
& = e^{-\lambda t} \sum_{h=0}^D p_h \sum_{n_0,\dots,n_D \ge1}\,  \frac{(\lambda t p_h )^{n_h-1}}{(n_h-1)!} \prod_{\substack{j=0\\j\not=h}}^D \frac{(\lambda t p_j )^{n_j}}{n_j!}\nonumber\\
&= e^{-\lambda t} \sum_{h=0}^D p_h e^{\lambda t p_h} \prod_{\substack{j=0\\j\not=h}}^D \bigl(e^{\lambda t p_j}-1\bigr)\nonumber\\
& = 1-P\bigg\{\bigcup_{h=0}^D \{N_h(t) =0\}\bigg\}. \label{integraleLeggeMotoMinimaleCanonicoCompleto}
\end{align}
For the details about the last equality see Appendix \ref{masseProabilitaMotoCanonicoCompleto}. If $p_0 = \dots = p_D = 1/(D+1)$, probability (\ref{integraleLeggeMotoMinimaleCanonicoCompleto}) reduces to $e^{\frac{-\lambda t D}{D+1}} \bigl(e^{\frac{-\lambda t }{D+1}}-1\bigr)^D$.

To obtain the distribution of the position of an arbitrary $D$-dimensional complete minimal random motion governed by a homogeneous Poisson process, we easily use the above probabilities and relationship (\ref{X'funzioneXcanonico}).
\hfill$\diamond$
\end{example}

\subsubsection{Distribution on the boundary of the support}\label{sottosezioneSingolaritaMotiMinimali}
Let $X$ be a minimal random motion with velocities $v_0,\dots,v_D$. Theorem \ref{teoremaLeggeIntersezioniMinimale} describes the joint probability in the inner part of the support of the position $X(t)$, i.e. Conv$(v_0t,\dots, v_Dt), t\ge0$. Now we deal with the distribution over the boundary of $\text{Supp}\bigl(X(t)\bigr)$, which can be partitioned in $\sum_{H=0}^{D-1} \binom{D+1}{H+1}$ components, corresponding to those in (\ref{supportoMotoMinimale}) with $H<D$.
\\

Fix $H \in\{ 0,\dots,D-1\}$ and let $I_H = \{i_0,\dots,i_H\}\in \mathcal{C}_{H+1}^{\{0,\dots,D\}}$ be a combination of $H+1$ indexes in $\{0,\dots, D\}$. At time $t\ge0$, the motion $X$ lies on the set $\overset{\circ}{\text{Conv}}(v_{i_0}t, \dots, v_{i_H}t)$ if and only if it moves with all and only the velocities $v_{i_0},\dots, v_{i_H}$ in the time interval $[0,t]$. Hence, if $X(t)\in \overset{\circ}{\text{Conv}}(v_{i_0}t, \dots, v_{i_H}t)\ a.s.$ we can write the following, for $k=0,\dots,H$,
\begin{equation}\label{XbordoFunzioneT}
X(t) = \sum_{h=0}^H v_{i_h}T_{(i_h)}(t) = v_{i_k}t +\sum_{\substack{h=0\\h\not=k}}^H (v_{i_h}-v_{i_k}) T_{(i_h)}(t) = g_k(T_{(i_k^-)}^H(t)) ,
\end{equation}
where $ T_{(\cdot)}^{H}(t) = \bigl(T_{(i_0)}(t),\dots, T_{(i_H)}(t)\bigr)$ and $ T_{(i_k^-)}^{H}(t) = \Bigl(T_{(i_h)}(t)\Bigr)_{\substack{h=0, \dots, H\\h\not=k}}$.  The function $g_k:[0,+\infty)^H\longrightarrow \mathbb{R}^D$ in (\ref{XbordoFunzioneT}) is an affine relationship. 

By keeping in mind that $v_0,\dots, v_D$ are affinely independent, then $\text{dim}\Bigl(\text{Conv}(v_{i_0}t, \dots, v_{i_H}t)\Bigr) = H$ and, from Lemma \ref{lemmaProiezione}, there exists an orthogonal projection onto a $H$-dimensional space, $p_H:\mathbb{R}^D\longrightarrow\mathbb{R}^H$, such that $v_{i_0}^H = p_H(v_{i_0}),\dots,v_{i_H}^H = p_H(v_{i_H})$ are affinely independent and that we can characterize the vector $X(t)$, when lying on the set $\overset{\circ}{\text{Conv}}(v_{i_0}t, \dots, v_{i_H}t)\ a.s.$, through its projection $X^H(t) = p_H\bigl(X(t)\bigr)$. Hence, we just need to study the projected motion
\begin{equation}\label{XHfunzioneTH}
X^H(t) = v_{i_k}^H t+\sum_{\substack{h=0\\h\not=k}}^H (v_{i_h}^H-v_{i_k}^H) T_{(i_h)}(t) = g_k^H\Bigl(T_{(i_k^-)}^H(t)\Bigr), \ \  \ t\ge0,\, k=0,\dots, H.
\end{equation}
It is straightforward to see that the vector $X^{H^-}(t)$ containing the components of $X(t)$ that are not included in $X^H(t)$, is such that, for $x\in\overset{\circ}{\text{Conv}}(v_{i_0}t, \dots, v_{i_H}t)$ with $x^H = p_H(x)\in \mathbb{R}^H$ and $x^{H^-}\in \mathbb{R}^{D-H}$ denoting the other entries of $x$,
\begin{equation}
P\bigg\{X^{H^-}(t)\in \dif y\,\Big|\, X^H(t) = x^H,\, \bigcap_{i\in\{0,\dots, D\}\setminus I_H} \{N_i(t) = 0\}\bigg\} = \delta \bigl(y-x^{H^-}\bigr)\dif y,
\end{equation}
with $y\in  \mathbb{R}^{D-H}$ and $\delta$ the Dirac delta function centered in $0$.

Now, the function $g_k^H:\mathbb{R}^H\longrightarrow \mathbb{R}^H$ in (\ref{XHfunzioneTH}) is a bijection and we can write, $\forall \ k$,
\begin{equation}\label{inversaXHfunzioneTH}
T^H_{(i_k^-)}(t) = \bigl(g_k^H\bigr)^{-1}\Bigl( X^H(t)\Bigr) = \Biggl(\bigl(g_k^H\bigr)_h^{-1}\Bigl( X^H(t)\Bigr)\Biggr)_{\substack{h=0,\dots, H\\h\not=k}} = \Biggl[v_{i}^H - v_{i_k}^H\Biggr]^{-1}_{\substack{i\in I_H\\i\not=i_k}}\,\Bigl(X^H(t)-v_{i_k}^Ht\Bigr).
\end{equation}
Note that formula (\ref{inversaXHfunzioneTH}) coincides with (\ref{inversatXfunzioneT}) if $H = D$.

\begin{theorem}\label{teoremaLeggeSingolaritaIntersezioniMinimale}
Let $X$ be a minimal finite-velocity random motion in $\mathbb{R}^D$ satisfying (H1)-(H2). Let $H = 0,\dots, D-1$ and $I_H = \{i_0,\dots,i_H\}\in \mathcal{C}_{H+1}^{\{0,\dots,D\}}$. Then the orthogonal projection $p_H:\mathbb{R}^D\longrightarrow\mathbb{R}^H$ defined in Lemma \ref{lemmaProiezione} (there $p_R$) exists and $v_{i_0}^H = p_H(v_{i_0}),\dots,v_{i_H}^H = p_H(v_{i_H})$ are affinely independent. Furthermore, for $t\ge0,\,x\in \overset{\circ}{\text{Conv}}(v_{i_0}t, \dots, v_{i_H}t),\ n_{i_0},\dots,n_{i_H}\in \mathbb{N}$ and $k=0,\dots, H$,
\begin{align}
& P\bigg\{X(t)\in \dif x,\,\bigcap_{h=0}^H\{N_{i_h}(t) = n_{i_h}\},\,\bigcap_{i\in I_{H^-}}\{N_{i}(t) = 0\} ,\, V(t) = v_{i_k}\bigg\}/ \dif x\label{leggeSingolaritaIntersezioniMinimale}\\
& = \prod_{\substack{h=0\\h\not=k}}^H f_{\sum_{j=1}^{n_{i_h}} W_j^{(i_h)}}\biggl(\bigl(g_{k}^H\bigr)_h^{-1}\bigl(x^H\bigr)\biggr) \,\Bigg|\biggl[v_{i}^H - v_{i_k}^H\biggr]_{\substack{i\in I_H\\i\not=i_k}}\Bigg|^{-1} \, P\Bigg\{N_{(i_k)}\biggl(t-\sum_{\substack{h=0\\h\not = k}}^H \bigl(g_{k}^H\bigr)_h^{-1}\bigl(x^H\bigr)\biggr) = n_{i_k}-1\Bigg\}\nonumber\\
&\ \ \ \times P\big\{C_{n_{i_0}+\dots+n_{i_H}} = (n_0,\dots,n_D), V(t) = v_{i_k}\big\},\nonumber
\end{align}
where $x^H = p_H(x)$, $\bigl(g_k^H\bigr)^{-1}$ is given in (\ref{inversaXHfunzioneTH}), $I_{H^- } = \{0,\dots, D\}\setminus I_H$ and suitable $n_0,\dots,n_D$.
\end{theorem}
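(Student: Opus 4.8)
The plan is to run the proof of Theorem~\ref{teoremaLeggeIntersezioniMinimale} almost verbatim, the only genuinely new ingredient being that, conditionally on moving with all and only the velocities $v_{i_0},\dots,v_{i_H}$, the $D$-dimensional position $X(t)$ is confined to the $H$-dimensional face $\overset{\circ}{\text{Conv}}(v_{i_0}t,\dots,v_{i_H}t)$; one therefore first passes to the projected motion $X^H = p_H\circ X$ through Lemma~\ref{lemmaProiezione}, reducing everything to an $H$-dimensional computation, and then repeats the chain \eqref{passaggio1teorema}--\eqref{passaggio3teorema} with $g_k^H$ in place of $g_k$.

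First I would dispatch the preliminary claims. Since $v_0,\dots,v_D$ are affinely independent, so is the sub-collection $v_{i_0},\dots,v_{i_H}$, whence $\dim\text{Conv}(v_{i_0},\dots,v_{i_H}) = H$; Lemma~\ref{lemmaProiezione} then yields the orthogonal projection $p_H\colon\mathbb{R}^D\to\mathbb{R}^H$, and by the remark following that lemma (the case $R=M<D$ of affinely independent vectors, applied to this sub-collection) the projected velocities $v_{i_0}^H,\dots,v_{i_H}^H$ are affinely independent as well. Consequently the matrix $[v_i^H-v_{i_k}^H]_{i\in I_H,\,i\neq i_k}$ is invertible for every $k$, so $g_k^H$ in \eqref{XHfunzioneTH} is a bijection with inverse \eqref{inversaXHfunzioneTH}, exactly as in Remark~\ref{remarktXfunzioneT}.

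Next, the core computation. Fix $H$, $I_H$, $k$ and $t\ge0$ and work on the event $E=\bigcap_{i\in I_{H^-}}\{N_i(t)=0\}\cap\bigcap_{h=0}^H\{N_{i_h}(t)=n_{i_h}\}$ with all $n_{i_h}\ge 1$. On $E$ the motion uses all and only the velocities indexed by $I_H$ in $[0,t]$, hence $X(t)\in\overset{\circ}{\text{Conv}}(v_{i_0}t,\dots,v_{i_H}t)$ a.s., and by Lemma~\ref{lemmaProiezione} the vector $X(t)$ is in bijective correspondence with $X^H(t)=p_H(X(t))$, the remaining coordinates $X^{H^-}(t)$ being the deterministic function of $X^H(t)$ recorded just before the statement. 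Thus $\{X(t)\in\dif x\}\cap E$ becomes $\{X^H(t)\in\dif x^H\}\cap E$ with $x^H=p_H(x)$. Now I reproduce \eqref{passaggio1teorema}--\eqref{passaggio3teorema}: using $\sum_{h=0}^H T_{(i_h)}(t)=t$ a.s., the event $E\cap\{V(t)=v_{i_k}\}$ is, up to the allocation event $\{C_{n_{i_0}+\dots+n_{i_H}}=(n_0,\dots,n_D)\}$, the event that $T_{(i_h)}(t)=\sum_{j=1}^{n_{i_h}}W_j^{(i_h)}$ for $h\neq k$ and $N_{(i_k)}(T_{(i_k)}(t))=n_{i_k}-1$; applying the change of variables $T^H_{(-i_k)}(t)\leftrightarrow X^H(t)$ via the bijection $g_k^H$ of \eqref{XHfunzioneTH}--\eqref{inversaXHfunzioneTH} (Jacobian $|[v_i^H-v_{i_k}^H]_{i\in I_H,\,i\neq i_k}|^{-1}$), substituting $T_{(i_k)}(t)=t-\sum_{h\neq k}(g_k^H)_h^{-1}(x^H)$, and finally factoring out the allocation event by (H1)--(H2), i.e.\ by \eqref{fintaIpotesi3} together with the independence of $N_{(i_0)},\dots,N_{(i_H)}$ from (H2), gives exactly \eqref{leggeSingolaritaIntersezioniMinimale}.

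The main obstacle is the first reduction: one must argue cleanly that, on $E$, the law of the $D$-dimensional $X(t)$ (conditionally supported on an $H$-dimensional affine piece) is faithfully encoded by the law of the $H$-dimensional $X^H(t)$ — this is precisely the injectivity statement of Lemma~\ref{lemmaProiezione} together with the Dirac-delta identity preceding the theorem, and it is what makes $P\{X(t)\in\dif x\}/\dif x$ meaningful, the ``$\dif x$'' being the $H$-dimensional volume element transported along $p_H$. Once this identification is in force, the remaining steps are word for word those of Theorem~\ref{teoremaLeggeIntersezioniMinimale} applied to the $H$-dimensional minimal dynamics $X^H$ with affinely independent velocities $v_{i_0}^H,\dots,v_{i_H}^H$ and waiting times $\{W_j^{(i_h)}\}_{j,h}$, and the formula \eqref{inversaXHfunzioneTH} collapsing to \eqref{inversatXfunzioneT} when $H=D$ shows the consistency with Theorem~\ref{teoremaLeggeIntersezioniMinimale}.
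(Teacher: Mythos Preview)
Your proposal is correct and follows essentially the same approach as the paper: the paper's own proof simply states that, in light of the considerations preceding the theorem (the existence of $p_H$, the affine independence of the projected velocities, and the bijection $g_k^H$ in \eqref{XHfunzioneTH}--\eqref{inversaXHfunzioneTH}), the argument proceeds exactly as in Theorem~\ref{teoremaLeggeIntersezioniMinimale}. You have spelled out in detail precisely those considerations and the replay of steps \eqref{passaggio1teorema}--\eqref{passaggio3teorema}, so your write-up is a faithful (and more explicit) version of the paper's proof.
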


Note that the projection defined in Lemma \ref{lemmaProiezione} is usually not the only suitable one.
\begin{proof}
In light of the considerations above, the proof follows equivalently to the proof of Theorem \ref{teoremaLeggeIntersezioniMinimale}
\end{proof}

\begin{remark}[Canonical motion]\label{remarkTeoriaMotoCanonicoSingolarita}
Let $X$ be a canonical (minimal) random motion, governed by a point process $N$, and $I_H = \{i_0,\dots,i_H\}\in \mathcal{C}_{H+1}^{\{0,\dots,D\}},\ H=0,\dots, D-1$. We build the projection $p_H$ such that it selects the first $H$ linearly independent rows of $(e_{i_0} \ \cdots\ e_{i_H})$, if $i_0 = 0$, and the last ones if $i_0\not=0$. Then, $(e_{i_0}^H \ \cdots\ e_{i_H}^H) = (0\ I_H)$ and, by proceeding as shown in Remark \ref{remarkDefMotoCanonico}, we obtain $T_{(\cdot)}^H(t) = \bigl(t-\sum_{h=1}^H X_{i_h}^H(t), X^H(t)\bigr)$; note that in this case the indexes of the velocities ($i_1,\dots,i_H$) coincide with the indexes of the selected coordinates of the motion

Now, if $Y$ is a minimal random motion with velocities $v_0,\dots, v_D$ and governed by $N_Y \stackrel{d}{=}N$, for each $I_H = \{i_0,\dots,i_H\}\in \mathcal{C}_{H+1}^{\{0,\dots,D\}},\ H=0,\dots, D-1$, by using the arguments leading to	(\ref{X'funzioneXcanonico}), we can write
\begin{equation}\label{X'funzioneXsingolarita}
Y^H(t) \stackrel{d}{=} v_{i_0}^H t +\biggl[v_{i}^H - v_{i_k}^H\biggr]_{\substack{i\in I_H\\i\not=i_k}} X^H(t).
\end{equation}
We point out that the motions are related through the times of the displacements with each velocity and not directly through their coordinates. This means that $X^H$ and $Y^H$ are not necessarily obtained through the same projection, but they are respectively related to the processes $T_{(\cdot)}^H$ and $T_{(\cdot)}^{Y,H}$ that have the same finite dimensional distributions since $N_Y \stackrel{d}{=}N$ (see proof of Theorem \ref{teoremaX'funzioneX}).
\hfill$\diamond$
\end{remark}

Note that Remark \ref{remarkTeoriaMotoCanonicoSingolarita} holds even though hypothesis (H1)-(H2) are not assumed.
\\

By comparing Theorem \ref{teoremaLeggeIntersezioniMinimale} with Theorem \ref{teoremaLeggeSingolaritaIntersezioniMinimale}, we note that there is a strong similarity between the distribution of a $D$-dimensional minimal motion over its singularity of dimension $H$ (in fact, dim$\Bigl(\text{Conv}(v_{i_0}t, \dots, v_{i_H}t)\Bigr)=H,\ t>0$) and the distribution of an $H$-dimensional minimal motion moving with velocities $v_{i_0}^H=p_H(v_{i_0}), \dots, v_{i_H}^H = p_H(v_{i_H})$. These kind of relationships are further investigated in the next sections (see also the next example); in particular, Theorem \ref{teoremaSingolaritaMotiPoisson} states a result concerning a wide class of random motions.

\begin{example}[Complete motions: distribution over the singular components]\label{esempioSingolaritaMotoMinimaleCompleto}
Let us consider the complete canonical random motion $X$ studied in Example \ref{esempioLeggeMotoCanonicoCompleto}. Let $I_H = \{i_0,\dots,i_H\}\in \mathcal{C}_{H+1}^{\{0,\dots,D\}}$ and $I_{H^-} = \{0,\dots, D\}\setminus I_H$, with $H=0,\dots, D-1$, we now compute the probability density of being in $x\in\overset{\circ}{\text{Conv}}(e_{i_0}t, \dots, e_{i_H}t)$ at time $t\ge0$. By keeping in mind Theorem \ref{teoremaLeggeSingolaritaIntersezioniMinimale} and Remark \ref{remarkTeoriaMotoCanonicoSingolarita} (and by proceeding as shown for probability (\ref{leggeIntersezioniMinimaleCompleto})), for integers $n_{i_0},\dots ,n_{i_H}\ge1$ and $k = 0,\dots,H$, we have that
\begin{align*}
 P\bigg\{&X(t)\in \dif x,\,\bigcap_{h=0}^H\{N_{i_h}(t) = n_{i_h}\},\,\bigcap_{i\in I_{H^-}}\{N_{i}(t) = 0\} ,\, V(t) = e_{i_k}\bigg\}/ \dif x \nonumber\\
&= \frac{e^{-\lambda t}}{\lambda}\,\biggl(\,\sum_{h=0}^H n_{i_h} -1\biggr)! \,n_{i_k}\prod_{h=0}^H \frac{(\lambda p_{i_h})^{n_{i_h}}\, x_{i_h}^{n_{i_h}-1}}{(n_{i_h}-1)!\,n_{i_h}!}
\end{align*}
where $x_{i_0} = t-\sum_{j=1}^H x_{i_j}$. Clearly, by working as shown in Example \ref{esempioLeggeMotoCanonicoCompleto}, we obtain
\begin{align}
P\bigg\{X(t)\in \dif x,\,\bigcap_{i\in I_{H^-}}\{N_{i}(t) = 0\}\bigg\}&/\dif x = \frac{e^{-\lambda t}}{\lambda} \sum_{n_{i_0},\dots,n_{i_H} \ge1}\, \biggl(\,\sum_{h=0}^H n_{i_h} \biggr)! \,\prod_{h=0}^H \frac{(\lambda p_{i_h})^{n_{i_h}}\, x_{i_h}^{n_{i_h}-1}}{(n_{i_h}-1)!\,n_{i_h}!} \nonumber\\
 & =  \frac{e^{-\lambda t}}{\lambda} \prod_{h=0}^H \sqrt{\frac{\lambda p_{i_h}}{x_{i_h}}} \int_0^\infty  e^{-w}   w^{\frac{H+1}{2}} \prod_{h=0}^H I_1\Bigl(2\sqrt{w\lambda p_{i_h} x_{i_h}}\Bigr)\dif w\nonumber
\end{align}
and
\begin{align}
\int_{\text{Conv}(e_{i_0}t,\dots, e_{i_H}t)}&P\bigg\{X(t)\in \dif x,\,\bigcap_{i\in I_{H^-}}\{N_{i}(t) = 0\}\bigg\} =   e^{-\lambda t} \sum_{h=0}^H p_{i_h} e^{\lambda t p_{i_h}} \prod_{\substack{j=0\\j\not=h}}^H \bigl(e^{\lambda t p_{i_j}}-1\bigr)\nonumber\\
& = P\bigg\{\bigcap_{i\in I_{H^-}}\{N_{i}(t) = 0\}\bigg\}-P\bigg\{\bigcup_{i\in I_H}\{N_{i}(t)=0\},\,\bigcap_{i\in I_{H^-}}\{N_{i}(t) = 0\}\bigg\},\label{massaSingolaritaMotoCanonicoCompleto}
\end{align}
where further details about the last equality are in Appendix \ref{masseProabilitaMotoCanonicoCompleto}.
\\

Let $Y$ be a complete minimal motion governed by a counting process $N_Y \stackrel{d}{=}N$ and moving with velocities $v_0,\dots, v_D$. By suitably applying relationship (\ref{X'funzioneXsingolarita}) and the above probabilities it is easy to obtain the distribution of the position $Y(t)$ over its singular components.
\hfill$\diamond$
\end{example}

\section{Random motions with a finite number of velocities}

\begin{proposition}\label{proposizioneMotiAVelocitaFinita}
Let $X$ be a random motion governed by a point process $N$ and moving with velocities $v_0,\dots, v_M\in \mathbb{R}^D, \ M\in\mathbb{N}$, such that $\text{dim}\Bigl(\text{Conv}(v_{0},\dots,v_{M})\Bigr) = R\le D$. Then, the orthogonal projection $p_R:\mathbb{R}^D\longrightarrow\mathbb{R}^R$ defined in Lemma \ref{lemmaProiezione} exists and, for $t\ge0$, we can characterize $X(t)$ through its projection $X^R(t) = p_R\bigl(X(t)\bigr)$, representing the position of a $R$-dimensional motion moving with velocities $p_R(v_0), \dots, p_R(v_M)$ and governed by $N$.
\end{proposition}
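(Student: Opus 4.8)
The plan is to derive everything from Lemma \ref{lemmaProiezione} together with the elementary fact recorded just before it, namely that for a real $R\times D$ matrix $A$ the process $\{AX(t)\}_{t\ge0}$ is an $R$-dimensional motion governed by $N$ with velocities $Av_0,\dots,Av_M$. Since $\dim\bigl(\text{Conv}(v_0,\dots,v_M)\bigr)=R$ by hypothesis, Lemma \ref{lemmaProiezione} applies directly and produces the index set $I^R$ and the linear orthogonal projection $p_R(x)=\bigl[e_i\bigr]^T_{i\in I^R}\,x$, which is represented by the $R\times D$ coordinate-selection matrix $A=\bigl[e_i\bigr]^T_{i\in I^R}$. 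By linearity of $p_R$ and of the time integral in \eqref{definizioneMoto} we have $X^R(t)=p_R\bigl(X(t)\bigr)=\int_0^t p_R\bigl(V(s)\bigr)\dif s$, so the cited fact immediately gives that $X^R=\{X^R(t)\}_{t\ge0}$ is an $R$-dimensional random motion, governed by $N$, with velocities $v^R_h=p_R(v_h)$, $h=0,\dots,M$. To check that these $M+1$ velocities are distinct and that the rule of changes of velocity carries over, one observes that $p_R$ is injective on $\text{Conv}(v_0,\dots,v_M)$: each $v_i$ lies in that convex hull, whence $p_R(v_i)\in\text{Conv}(v^R_0,\dots,v^R_M)$, and the uniqueness clause of Lemma \ref{lemmaProiezione} forces $v_i=v_j$ as soon as $p_R(v_i)=p_R(v_j)$. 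Hence $V^R=p_R\circ V$ takes $M+1$ distinct values, still switches only at events of $N$, and inherits the transition probabilities of $V$.

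Next I would establish that $X(t)$ is recovered from $X^R(t)$. Fix $t>0$ (the case $t=0$ is trivial since both processes vanish). By \eqref{supportoMoto}, $X(t)\in\text{Conv}(v_0t,\dots,v_Mt)$; multiplying all the vectors by $t>0$ alters neither the dimension of the convex hull nor the family of linearly independent rows of the difference matrix, so the same $I^R$ and $p_R$ remain admissible for $v_0t,\dots,v_Mt$, and Lemma \ref{lemmaProiezione} guarantees that to each $x^R\in\text{Conv}(v^R_0t,\dots,v^R_Mt)$ corresponds a unique $x\in\text{Conv}(v_0t,\dots,v_Mt)$ with $p_R(x)=x^R$. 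Applying this with $x^R=X^R(t)=p_R\bigl(X(t)\bigr)$ and using that $X(t)$ itself belongs to $\text{Conv}(v_0t,\dots,v_Mt)$, we get that $X(t)$ is precisely that unique preimage; equivalently, $p_R$ restricted to $\text{Supp}\bigl(X(t)\bigr)$ is a bijection onto $\text{Supp}\bigl(X^R(t)\bigr)$, with inverse equal to the restriction to $\text{Aff}(v^R_0t,\dots,v^R_Mt)$ of an affine (hence Borel) bijection between $R$-dimensional affine subspaces. This is the asserted characterization of $X(t)$ through $X^R(t)$.

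I do not expect a genuinely hard step here: the proposition is essentially a repackaging of Lemma \ref{lemmaProiezione} and the linear-image remark. The only matters requiring care are the bookkeeping ones pointed out above — that the projection built for the fixed family $v_0,\dots,v_M$ simultaneously serves the scaled families $v_0t,\dots,v_Mt$ for all $t>0$, and that $p_R$ does not merge two distinct velocities, so that $X^R$ is a bona fide motion with $M+1$ velocities carrying the same switching law as $X$ — and both reduce to the uniqueness assertion in Lemma \ref{lemmaProiezione}.
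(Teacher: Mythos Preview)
Your proposal is correct and follows essentially the same route as the paper: both invoke Lemma~\ref{lemmaProiezione} to obtain the projection $p_R$ and then use its injectivity on the convex hull (together with the linear-image remark preceding the lemma) to conclude that $X(t)$ and $X^R(t)$ determine one another. The paper's own proof is a two-sentence sketch that records the equality of events $\{X(t)\in A\}=\{X^R(t)\in A^R\}$ for measurable $A$ in the support; your version is simply more explicit about the bookkeeping (the scaling by $t$, the distinctness of the projected velocities, and the inherited switching law), but no genuinely different idea is involved.
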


\begin{proof}
The projection $p_R$ exists since the hypothesis of Lemma \ref{lemmaProiezione} are satisfied. By keeping in mind the characteristics of $p_R$ (see Lemma \ref{lemmaProiezione}), we immediately obtain that $\forall\ A\subset \text{Conv}(v_{0},\dots,v_{M})$ and its projection through $p_R$, $A^R \subset \text{Conv}\bigl(p_R(v_{0}),\dots,p_R(v_{M})\bigr)$, $\{\omega\in \Omega\,:\,X(\omega,t)\in A\} = \{\omega\in \Omega\,:\,X^R(\omega,t)\in A^R\}$.
\end{proof}

Proposition \ref{proposizioneMotiAVelocitaFinita} states that if $\text{dim}\Bigl(\text{Conv}(v_{0},\dots,v_{M})\Bigr) = R\le D$, then we can equivalently study the process $X$, a random motion with $M+1$ velocities in $\mathbb{R}^D$, or its projection $X^R$, a random motion of $M+1$ velocities in $\mathbb{R}^R$. This means that we can limit ourselves to the study of random motions where the dimension of the space coincides with the dimension of the state space. Clearly, for $R=D$ Proposition \ref{proposizioneMotiAVelocitaFinita} is not of interest since $p_R$ is the identity function.

\begin{remark}[Motions with affinely independent velocities]\label{remarkMotiVelocitaAffini}
Let $X$ be a random motion moving with affinely independent velocities $v_0,\dots, v_H\in\mathbb{R}, \ H\le D$. In light of Proposition \ref{proposizioneMotiAVelocitaFinita}, $\exists$ the orthogonal projection $p_H$ given in Lemma \ref{lemmaProiezione} such that to study $X^H = \big\{p_H\bigl(X(t)\bigr)\big\}_{t\ge0}$ is equivalent to study $X$. The process $X^H$ is a minimal random motion moving with velocities $p_H(v_0), \dots, p_H(v_H)$ and, if it satisfies (H1)-(H2), theorems \ref{teoremaLeggeIntersezioniMinimale} and \ref{teoremaLeggeSingolaritaIntersezioniMinimale} provide its probability law.
\hfill$\diamond$
\end{remark}

\begin{example}[Motion with canonical velocities]
Let $X$ a $D$-dimensional motion moving with the first $H$ canonical velocities $e_0,\dots,e_H$ and satisfying (H1)-(H2). For $t\ge0$, Supp$\bigl(X(t)\bigr) = \{x\in \mathbb{R}^D\,:\, x\ge0,\ x_{H+1},\dots,x_D = 0,\ \sum_{i=1}^H x_i = t\}$ and, by following the arguments of Section \ref{sottosezioneSingolaritaMotiMinimali} we can derive the probability distribution of $X(t)$ in the inner part of its support by using formula (\ref{leggeSingolaritaIntersezioniMinimale}), that uses the connection to the projected position $p_H\bigl(X(t)\bigr)$. In this case, the last probability of (\ref{leggeSingolaritaIntersezioniMinimale}) becomes $P\big\{C_{n_0+\dots+n_H} = (n_0,\dots,n_H), V(t) = v_{i_k}\big\}$ with $n_0,\dots, n_H\not=0,$ and therefore it coincides with the probability of the $H$-dimensional canonical motion.
\hfill$\diamond$
\end{example}

\subsection{Motions in $\mathbb{R}^D$ with $D$-dimensional state space}

Thanks to Proposition \ref{proposizioneMotiAVelocitaFinita} and Remark \ref{remarkMotiVelocitaAffini}, in order to cover the analysis of all the possible motions (under the given assumptions), we need to deal with random motions in $\mathbb{R}^D$ moving with $M+1$ velocities, $M>D$, and with state space of dimension $D$.


\begin{proposition}\label{propMotiNonMinimali}
Let $X$ be a random motion governed by a point process $N$ and moving with velocities $v_0,\dots, v_M\in \mathbb{R}^D, \ D<M\in\mathbb{N}$, such that $\text{dim}\Bigl(\text{Conv}(v_{0},\dots,v_{M})\Bigr) = D$. Then, there exists a minimal  random motion $\tilde{X}$ in $\mathbb{R}^M$ such that $X$ is the marginal vector process of $\tilde{X}$ represented by its first $D$ components.
\end{proposition}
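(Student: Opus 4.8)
The plan is to obtain $\tilde X$ by \emph{lifting} the velocities of $X$ into $\mathbb{R}^M$. Concretely, I would look for vectors $\tilde v_0,\dots,\tilde v_M\in\mathbb{R}^M$ whose first $D$ coordinates are exactly $v_0,\dots,v_M$ and which are affinely independent, and then let $\tilde X=\{\tilde X(t)\}_{t\ge0}$ be the random motion driven by the \emph{same} point process $N$ and the \emph{same} velocity--index process as $X$, but moving with the $\tilde v_h$'s in place of the $v_h$'s. Since $\tilde X$ would then move in $\mathbb{R}^M$ with $M+1$ affinely independent velocities, it is minimal by definition; and since deleting the last $M-D$ coordinates of $\tilde v_h$ returns $v_h$, the representation $\tilde X(t)=\sum_{h=0}^M \tilde v_h\, T_{(h)}(t)$ from (\ref{definizioneMoto}), with the occupation times $T_{(h)}$ shared by $X$ and $\tilde X$, gives that the first $D$ components of $\tilde X(t)$ coincide with $X(t)=\sum_{h=0}^M v_h\,T_{(h)}(t)$ for every $t\ge 0$. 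Thus $X$ would literally (not merely in distribution) be the marginal vector process of $\tilde X$ made of its first $D$ components.

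The substantive step is the existence of such a lift, and this is exactly where the hypothesis $\text{dim}\bigl(\text{Conv}(v_0,\dots,v_M)\bigr)=D$ is used. Write $A=\bigl[\,v_1-v_0\ \cdots\ v_M-v_0\,\bigr]$, so that the hypothesis reads $\text{rank}\,A=D$. After relabelling the velocities (which affects neither $X$ nor the claim) I may assume $v_0,\dots,v_D$ are affinely independent, i.e.\ the first $D$ columns of $A$ form an invertible $D\times D$ block $A_1$, and I write $A=\bigl[\,A_1\ A_2\,\bigr]$. Then I set $\tilde v_h=\begin{pmatrix}v_h\\ 0\end{pmatrix}\in\mathbb{R}^M$ for $h=0,\dots,D$ and $\tilde v_h=\begin{pmatrix}v_h\\ e_{h-D}\end{pmatrix}\in\mathbb{R}^M$ for $h=D+1,\dots,M$, where $e_1,\dots,e_{M-D}$ is the standard basis of $\mathbb{R}^{M-D}$. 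The matrix $\bigl[\,\tilde v_1-\tilde v_0\ \cdots\ \tilde v_M-\tilde v_0\,\bigr]$ then has the block form $\begin{pmatrix}A_1 & A_2\\ 0 & I_{M-D}\end{pmatrix}$, which is invertible because it is block upper triangular with invertible diagonal blocks; hence $\tilde v_1-\tilde v_0,\dots,\tilde v_M-\tilde v_0$ are linearly independent, that is, $\tilde v_0,\dots,\tilde v_M$ are affinely independent, as required. (Equivalently, since $A$ has full row rank $D$ one may always append $M-D$ rows to form an invertible $M\times M$ matrix; the choice above is just a convenient explicit one.)

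It then remains to record the probabilistic construction carefully: on the same filtered probability space keep $N$ and keep the $\{0,\dots,M\}$-valued index process $J$ underlying $V$ (so that $V(t)=v_{J(t)}$ and the rule of changes of velocity is the one governing $J$), and put $\tilde V(t)=\tilde v_{J(t)}$, $\tilde X(t)=\int_0^t\tilde V(s)\dif s$. Because $J$ can switch only when $N$ records an event and the $\tilde v_h$ are distinct, $\tilde V$ meets the admissibility requirement for a velocity process, $\tilde X$ is a genuine random motion governed by $N$, it is minimal by the previous paragraph, and $T_{(h)}(t)=\int_0^t\mathds{1}(J(s)=h)\dif s$ is common to $X$ and $\tilde X$; the computation sketched in the first paragraph then closes the argument. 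The only mildly delicate point in the whole proof is the linear-algebra lift of the second paragraph, and even that is routine once the rank hypothesis is invoked — I do not expect a genuine obstacle here.
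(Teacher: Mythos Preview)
Your proof is correct and follows essentially the same approach as the paper: lift the velocities to affinely independent vectors in $\mathbb{R}^M$ whose first $D$ coordinates agree with the $v_h$, and drive $\tilde X$ with the same point process $N$ and the same velocity-index process. In fact you are more thorough than the paper, which simply asserts the existence of such a lift without justification, whereas you give an explicit block-triangular construction and correctly identify that the hypothesis $\text{dim}\bigl(\text{Conv}(v_0,\dots,v_M)\bigr)=D$ is precisely what makes the diagonal block $A_1$ invertible.
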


\begin{proof}
Let $V,N$ be the processes respectively governing the velocity and the displacements of $X$. Let $\pi_D:\mathbb{R}^M\longrightarrow \mathbb{R}^D, \ \pi_D(\tilde{x}) = (I_D\ 0)\tilde{x},\, \tilde{x}\in\mathbb{R}^M$. Then, $\exists\ \tilde{v}_0,\dots, \tilde{v}_M\in \mathbb{R}^M$ affinely independent such that $\pi_D(\tilde{v}_h) = v_h\ \forall \ h$. The random motion $\tilde{X}$ with displacements governed by $N$ and velocity process $\tilde{V}$, with state space $\{\tilde{v_0},\dots,\tilde{v_M}\}$ and such that $\pi_D\bigl(\tilde{V}(t)\bigr) = V(t)$ (i.e. $\{\tilde{V}(t) = \tilde{v}_h\} \iff \{V(t) = v_h\}\ \forall\ h,t$), is a minimal random motion in $\mathbb{R}^M$ and $\pi_D\bigl(\tilde{X}(t)\bigr) = X(t)\ \forall\ t$.
\end{proof}

From the proof of Proposition \ref{propMotiNonMinimali} it is obvious that $\exists$ infinite $M$-dimensional stochastic motions $\tilde{X}$ of the required form.

\begin{remark}\label{distribuzioneMotoGenerale}[Distribution of the position of the motion]
Let $X$ be a random motion with velocities $v_0,\dots, v_M\in \mathbb{R}^D, \ M\in\mathbb{N}$, such that $\text{dim}\Bigl(\text{Conv}(v_{0},\dots,v_{M})\Bigr) = D$. In light of Proposition \ref{propMotiNonMinimali}, we provide the distribution of $X(t),\, t\ge0,$ in terms of the probabilities of the position of minimal random motions. 

Let $\tilde{X}$ be a minimal random motion such as in Proposition \ref{propMotiNonMinimali} and $\pi_D$ the orthogonal projection in the proof above. Now, for $t\ge0,\,x\in \overset{\circ}{\text{Conv}}(v_{0}t, \dots, v_{M}t)$, natural $n_0,\dots,n_M\ge1$ and $k=0,\dots, M$, we can write
\begin{align}
 P&\bigg\{X(t)\in \dif x,\,\bigcap_{h=0}^M\{N_h(t) = n_h\} ,\, V(t) = v_k\bigg\}\label{leggeIntersezioniNonMinimale}\\
& = \int_{A_x}P\bigg\{\tilde{X}(t)\in \dif (x,y),\,\bigcap_{h=0}^M\{N_h(t) = n_h\} ,\, \tilde{V}(t) = \tilde{v}_k\bigg\} \nonumber
\end{align}
where $A_x = \big\{y\in \mathbb{R}^{M-D}\,:\, (x,y)\in \text{Conv}(\tilde{v}_{0}t,\dots,\tilde{v}_{M}t)\big\}$; clearly, $\pi_D(x,y) =(I_D \ 0)(x,y)= x$. Under assumptions (H1)-(H2), probability (\ref{leggeIntersezioniNonMinimale}) can be written explicitly by means of Theorem \ref{teoremaLeggeIntersezioniMinimale}.

Remember that, differently from the minimal motion case, the support of $X(t)$ is not partitioned by the elements appearing in (\ref{supportoMotoMinimale}) (since they are not disjoint). Thus, for fix $t\ge0$ and $x\in \text{Conv}(v_{0}t, \dots, v_{M}t)$ there may exist several combinations of velocities (and their corresponding times) such that the motion is in position $x$ at time $t$. With $H=1,\dots, M$, let $ I_{x,t,H}^{(1)},\dots, I_{x,t,H}^{(L_H)}\in \mathcal{C}^{\{0,\dots, M+1\}}_{H+1}$ be the $L_H\le\binom{M+1}{H+1}$ possible combinations of $H+1$ velocities such that the motion can lie in $x$ at time $t$, i.e. $x\in \overset{\circ}{\text{Conv}}(v_{i_0}t, \dots, v_{i_H}t)$ with $i_0\dots,i_H\in  I_{x,t,H}^{(l)}, \ \forall \ l,H$ (clearly, for some $H$ it can happen that there are no suitable combinations in $\mathcal{C}^{\{0,\dots, M+1\}}_{H+1}$, so $L_H = 0$). In general, we can write (omitting the indexes $x,t$ of $I_{x,t,H}^{(l)}$),
\begin{align}
&P\{X(t)\in \dif x\}/\dif x \nonumber \\
&=\sum_{k=0}^M P\bigg\{X(t)\in \dif x,\,\bigcup_{H=1}^M\bigcup_{l=1}^{L_H} \Big\{ \bigcap_{i\in I_{H}^{(l)}}\{N_i(t)\ge1\},\, \bigcap_{i\in I_{H^-}^{(l)}}\{N_i(t) = 0\}\Big\},\, V(t) = v_k\bigg\} /\dif x\nonumber\\
& = \sum_{k=0}^M\sum_{H=1}^M\sum_{l=1}^{L_H} \sum_{\substack{n_h=1\\h\in I_{H}^{(l)}}}^\infty  P\bigg\{X(t)\in \dif x,\, \bigcap_{i\in I_{H}^{(l)}}\{N_i(t)=n_i\},\, \bigcap_{i\in I_{H^-}^{(l)}}\{N_i(t) = 0\},\, V(t) = v_k\bigg\}  /\dif x\label{formulaGeneraleMotiNonMinimali}
\end{align}
where $I_{H^-}^{(l)} = \{0,\dots, M\}\setminus I_H^{(l)},\ \forall\ l,H$.
\\Now, under the hypothesis (H1)-(H2), the probabilities appearing in (\ref{formulaGeneraleMotiNonMinimali}) can be obtained by using previous results. Consider the combination of velocities $I_{H}^{(l)} =\{i_0,\dots, i_H\}$:
\begin{itemize}
\item[($a$)] if $\text{dim}\Bigl(\text{Conv}(v_{i_0},\dots,v_{i_H})\Bigr) = H (\le D)$ then we can compute the corresponding probability in (\ref{formulaGeneraleMotiNonMinimali}) by suitably using Theorem \ref{teoremaLeggeSingolaritaIntersezioniMinimale} (if $H=D$, then the projection described in Theorem \ref{teoremaLeggeSingolaritaIntersezioniMinimale} turns into the identity function).

\item[($b$)] if $\text{dim}\Bigl(\text{Conv}(v_{i_0},\dots,v_{i_H})\Bigr) = R < H$, then we use the following argument. In light of Proposition \ref{proposizioneMotiAVelocitaFinita}, we can consider the orthogonal projection $p_R$ defined in Lemma \ref{lemmaProiezione} and we study the process $X^R$ with velocities $v_{i_0}^R = p_R(v_{i_0}),\dots, v_{i_H}^R = p_R(v_{i_H})$. Then, $X^R$ is a $R$-dimensional motion with $H+1$ velocities and we can proceed as shown for probability (\ref{leggeIntersezioniNonMinimale}). Let us denote with $\tilde{X}^R$ the minimal motion such that $\pi_R\bigl(\tilde{X}^R(t)\bigr) =(I_R\ 0)\tilde{X}(t) = X^R(t), \ t\ge0$, and with $\tilde{V}^R$ the corresponding velocity process, with state space $\{\tilde{v}^R_{i_0},\dots, \tilde{v}_{i_H}^R\},$ where $\pi(\tilde{v}^R_{i_h}) = v_{i_h}^R\ \forall\ h$. Now, for $n_{i_0}, \dots, n_{i_H}\in \mathbb{N}$ and $k=0,\dots, H$,
\begin{align}
&P\bigg\{X(t)\in \dif x,\, \bigcap_{i\in I_{H}^{(l)}}\{N_i(t)=n_i\},\, \bigcap_{i\in I_{H^-}^{(l)}}\{N_i(t) = 0\},\, V(t) = v_k\bigg\}/ \dif x\label{formulaGeneraleMotiNonMinimali2}\\
& = \int_{A_{x}}P\bigg\{\tilde{X}^R(t)\in \dif (x^R,y),\, \bigcap_{i\in I_{H}^{(l)}}\{N_i(t)=n_i\},\, \bigcap_{i\in I_{H^-}^{(l)}}\{N_i(t) = 0\},\, \tilde{V}^R(t) = \tilde{v}^R_{i_k}\bigg\}  /\dif x^R \nonumber
\end{align}
where $A_x = \{y\in \mathbb{R}^{H-R}\,:\, (x^R,y) \in \text{Conv}(\tilde{v}_{i_0}t,\dots,\tilde{v}_{i_H}t)\big\} $ and clearly $\pi_R(x^R,y) = x^R$.\hfill$\diamond$
\end{itemize}
\end{remark}

\begin{example}
Let $X$ be a one-dimensional cyclic motion moving with velocities $v_0 = 0, v_1 = 1, v_2 = -1$ and $p_h = P\{V(0) = v_h\}>0\ \forall\ h$. Let $N$ be its governing Poisson-type process such that $W^{(h)}_j\sim Exp(\lambda_h),\ h=0,1,2, \ j\in\mathbb{N}$. We now consider the two-dimensional minimal random motion $(X,Y)$ moving with velocities $\tilde{v}_0 = (0,1), \tilde{v}_1 =(1, 0),  \tilde{v}_2 = (-1,0)$ governed by $N$. Let $t\ge0$ and $x\in (0,t)$. In order to reach $x$ the motion must perform at least one displacements with $v_1$. Thus, by keeping in mind the cyclic routine for the velocities ($\dots \rightarrow v_0\rightarrow v_1\rightarrow v_2\rightarrow\dots$), the probability reads
\begin{align}
&P\{X(t)\in \dif x\}= \nonumber\\
&P \{X(t)\in \dif x,\, N_0(t) = 1,\, N_1(t) = 1,\, N_2(t) = 0\} + P \{X(t)\in \dif x,\, N_0(t) = 0,\, N_1(t) = 1,\, N_2(t) = 1\} \nonumber\\
&\ \ \  + \sum_{j=0}^{2} P \bigg\{X(t)\in \dif x,\, \bigcup_{n=1}^\infty N(t) = 3n+j-1\bigg\}\nonumber\\
& = P\{W_1^{(0)}\in \dif(t- x),\,V(0) = v_0\} + P\{W_1^{(1)}\in \dif \frac{(t+x)}{2},V(0) = v_1\}\nonumber\\
&\ \ \ +\sum_{j=0}^{2}\,\int_0^{t-x} P\bigg\{X(t)\in \dif x,\, Y(t)\in \dif y,\,\bigcup_{n=1}^\infty N(t) = 3n+j-1\bigg\}.\label{integraleMotoNonMinimale}
\end{align}
The first two terms are respectively given by $p_0 \lambda_0e^{-\lambda_0 (t-x)}\dif x$ and $p_1\lambda_1 e^{-\lambda_1 \frac{t+x}{2}}\dif x$. By suitably applying Theorem \ref{teoremaLeggeIntersezioniMinimale} or Example \ref{remarkLeggeMotoCiclico}, the interested reader can explicitly compute (\ref{integraleMotoNonMinimale}). Note that the integral in (\ref{integraleMotoNonMinimale}) is of the form $\int_0^{t-x} y^{n_0} \Bigl(\frac{t+x-y}{2}\Bigr)^{n_1}\Bigl(\frac{t-x-y}{2}\Bigr)^{n_2} \dif y$ with suitable natural $n_0,n_1,n_2$.
\hfill$\diamond$
\end{example}

\section{Random motions governed by non-homogeneous Poisson process}\label{sezioneMotiGovernatiPoisson}

Here we consider a random motion $X$ moving with a natural number of finite velocities $v_0,\dots, v_M\in\mathbb{R}^D, M\in \mathbb{N}$, whose movements are governed by a non-homogeneous Poisson process $N$ with rate function $\lambda:[0,\infty)\longrightarrow[0,\infty)$. In this case $N$ cannot explode in a bounded time interval if and only if $\Lambda(t) = \int_0^t \lambda(s)\dif s < \infty, t\ge0$. We note that the process $X$ satisfies (H2) if and only if $\lambda(t) = \lambda>0\ \forall\ t$.

Let us assume that, $\forall\ t,\, p_i =P\{V(0) = v_i\}$ and $p_{i,j}=P\{V(t+\dif t) = v_j\,|\,  V(t)=v_i,\,N(t, t+\dif t] = 1\} \ge 0$ for each $i,j = 0,\dots, M$. Let us also consider the notation, with $t\ge0, x\in \text{Supp}\bigl(X(t)\bigr)$,
$$p(x,t)\dif x = P\{X(t)\in \dif x\} = \sum_{i=0}^M P\{X(t)\in \dif x,\, V(t) = v_i\} = \sum_{i=0}^M f_i(x,t)\dif x.$$
It can be proved that the functions $f_i$ satisfy the differential problem (with $<\cdot,\cdot>$ denoting the dot product in $\mathbb{R}^D$)
\begin{equation}\label{sistemafi}
\begin{cases}
\frac{\partial f_i}{\partial t} = -<\nabla_x f_i, v_i > -\lambda(t)f_i +\lambda(t)\sum_{j=0}^M p_{j,i}f_j,\ \ \ i=0,\dots, M,\\
f_i(x,t)\ge0,\ \ \  \forall\ i,x,t,\\
\int_{\text{Conv}(v_0t,\dots,v_Mt)} \sum_{i=0}^M f_i(x,t)\dif x = 1 - P\Big\{ \bigcup_{h=0}^M \{N_h(t) = 0\}\Big\},
\end{cases}
\end{equation}
where $\nabla_x f$ represents the $x$-gradient vector of $f$ and $P\Big\{ \bigcup_{h=0}^M \{N_h(t) = 0\}\Big\}>0 \iff \Lambda(t)<\infty\ \forall\ t$. We refer to \cite{CO2023, CO2022, KT1998, O1990} for proofs similar to the one leading to (\ref{sistemafi}).

\begin{remark}[Complete minimal motions]
Let $X$ be a complete canonical (minimal) random motion (see Example \ref{esempioLeggeMotoCanonicoCompleto}) such that $\forall\ i,j,\ p_{i,j} = p_j$. The differential problem governing the probability law of $X$ satisfies
\begin{equation}\label{sistemaMotoCompletoMinimale}
\begin{cases}
\displaystyle\frac{\partial f_0}{\partial t} =\lambda(t)p_0 \sum_{j=1}^D f_j + \lambda(t)(p_0-1)f_0,\\
\displaystyle\frac{\partial f_i}{\partial t} =  -\frac{\partial f_i}{\partial x_i} + \lambda(t)p_i\sum_{\substack{j=0\\j\not=i}}^D f_j + \lambda(t)(p_i-1)f_i,\ \  i=1,\dots,D,\\
f_i(x,t)\ge0, \ \ \forall \ i,x,t, \ \ \ \int_{\text{Supp}\bigl(X(t)\bigr)} \sum_{i=0}^D f_i(x,t)\dif x = 1-P\bigg\{\bigcup_{h=0}^D \{N_h(t) =0\}\bigg\}.
\end{cases}
\end{equation}
Through a direct calculation, it is easy to show that the probabilities obtained by suitably adapting distributions (\ref{leggeIntersezioniMinimaleCompleto}), i.e. by summing with respect to $n_0,\dots,n_D\ge1$, satisfy the partial differential equations in (\ref{sistemaMotoCompletoMinimale}) with $\lambda(t) = \lambda>0\ \forall\ t$. Furthermore, as shown in Example \ref{esempioLeggeMotoCanonicoCompleto}, the sum of these probabilities, i.e. (\ref{leggeMotoMinimaleCompleto}), satisfies the condition in system (\ref{sistemaMotoCompletoMinimale}) (see (\ref{integraleLeggeMotoMinimaleCanonicoCompleto})).

It is also possible to show that, if $\lambda(t) = \lambda>0, \ \forall\ t$, probability (\ref{leggeMotoMinimaleCompleto}) (that is $p=\sum_i f_i$) is solution to the following $D$-th order partial differential equation
\begin{equation}\label{relazioneDifferenzialeMotoCompletoMinimale}
\sum_{k=0}^D \sum_{i\in \mathcal{C}_k^{\{1,\dots,D\}}} \sum_{h=0}^{D+1-k} \lambda^{D+1-(h+k)} \Biggl[ \binom{D+1-k}{h} - \Bigl(p_0+\sum_{j\not \in i} p_j\Bigr)\binom{D-k}{h}\Biggr] \frac{\partial^{h+k} p}{\partial t^{h}\partial x_{i_1}\cdots \partial x_{i_k}} = 0
\end{equation}
The proof of this result is shown in Appendix \ref{appendicePDEMotoMinimaleCompleto}.
\hfill$\diamond$
\end{remark}

The next statement concerns the distribution over the singular components when $N$ cannot explode in finite time intervals.

\begin{theorem}\label{teoremaSingolaritaMotiPoisson}
Let $X$ be a finite-velocity random motion moving with velocities $v_0,\dots, v_M\in\mathbb{R}^D,\ M\in \mathbb{N}$, governed by a non-homogeneous Poisson process $N$ with rate function $\lambda\in C^{M}\bigl([0,\infty),[0,\infty)\bigr)$ such that $\Lambda(t) = \int_0^t \lambda(s)\dif s < \infty, t\ge0$. Let $p_i = P\{V(0) = v_i\}>0$ and $p_{i,j} = P\big\{V(t+\dif t) = v_j\,|\,V(t)=v_i,\, N(t, t+\dif t] = 1\big\} \ge 0$ for each $i,j = 0,\dots, M,\,\forall\ t$. 
\\Set $H = 0,\dots, M-1$, $I_H = \{i_0,\dots,i_H\}\in \mathcal{C}_{H+1}^{\{0,\dots,M\}}$ and $I_{H^- } = \{0,\dots, M\}\setminus I_H$. If
\begin{equation}\label{ipotesiTransizioniPoisson}
\sum_{j\in I_H} p_{i_k,j} = P\big\{V(t+\dif t)\in \{v_{i_0}, \dots,v_{i_H}\}\,|\, V(t) = v_{i_k}, N(t,t+\dif t] = 1\big\} = \alpha_{I_H}>0
\end{equation}
for $k =0,\dots, H$ and $t\ge0$. Then, with $\text{dim}\Bigl(\text{Conv}(v_{i_0},\dots,v_{i_H})\Bigr) = R\le D$, $\exists$ an orthogonal projection $p_R:\mathbb{R}^D\longrightarrow\mathbb{R}^R$ such that, for $t\ge0,\, x\in \overset{\circ}{\text{Conv}}(v_{i_0}t, \dots, v_{i_H}t) $, with $x^R = p_R(x)$,
\begin{align}
P\bigg\{X(t)\in \dif x\,\Big|\,\bigcap_{j\in I_{H^-}}\{N_{j}(t) = 0\}\bigg\}/\dif x = P\Big\{Y^R(t)\in \dif x^R\Big\}/\dif x^R,\label{proiezioneSingolaritaNonOmogeneo}
\end{align}
where $Y^R$ is a $R$-dimensional finite-velocity random process governed by a non-homogeneous Poisson process with rate function $\lambda\alpha_{I_H}$, moving with velocities $v_{i_0}^R =p_R(v_{i_0}),\dots, v_{i_H}^R =p_R(v_{i_H})$ and such that $ p^Y_i = p_i/\sum_{j\in I_H} p_j$ and $p^Y_{i,j} = p_{i,j}/\alpha_{I_H}\ \forall\ i,j\in I_H$.
\end{theorem}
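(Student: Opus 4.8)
The plan is to show that, conditionally on the event $E_t:=\bigcap_{j\in I_{H^-}}\{N_j(t)=0\}$, the motion $X$ becomes (after the projection $p_R$) a copy of $Y^R$, by a thinning-type argument on the governing Poisson process, and then to transfer the resulting identity of laws to densities via Lemma~\ref{lemmaProiezione}. It is convenient to decompose the dynamics in the standard way: since $N$ is a non-homogeneous Poisson process and the velocity is resampled at each of its events according to $(p_{i,j})$, we may write $V(s)=Y_{N(s)}$, where $\{Y_n\}_{n\ge0}$ is the Markov chain with $Y_0\sim(p_i)$ and transition probabilities $(p_{i,j})$, and $\{Y_n\}$ is independent of $N$. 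In these terms $E_t=\{Y_0,Y_1,\dots,Y_{N(t)}\in\{v_{i_0},\dots,v_{i_H}\}\}$ is the event that the embedded velocity chain never leaves $I_H$ up to step $N(t)$.

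The first step is the computation of $P(E_t\mid N(t)=n)$. Conditionally on $N(t)=n$, the event $E_t$ depends only on $(Y_0,\dots,Y_n)$; summing the path weights $p_{i_{j_0}}\prod_{l=1}^{n}p_{i_{j_{l-1}},i_{j_l}}$ over all $(i_{j_0},\dots,i_{j_n})\in I_H^{\,n+1}$ — performing the sums over the trailing indices one at a time and using $\sum_{i\in I_H}p_{i_k,i}=\alpha_{I_H}$ from (\ref{ipotesiTransizioniPoisson}) at each stage — gives $P(E_t\mid N(t)=n)=\bigl(\sum_{i\in I_H}p_i\bigr)\alpha_{I_H}^n$. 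Feeding this into Bayes' formula produces, all conditionally on $E_t$: (i) $N(t)\sim\mathrm{Poisson}\bigl(\Lambda(t)\alpha_{I_H}\bigr)$ while, given $N(t)=n$, the arrival times are still the order statistics of $n$ i.i.d.\ points with density $\lambda(s)/\Lambda(t)$ on $[0,t]$ (because $E_t$, given $N(t)=n$, does not involve the arrival times), so the restriction of $N$ to $[0,t]$ is now a non-homogeneous Poisson process of rate $\lambda\alpha_{I_H}$; (ii) dividing a path weight by the normalizing constant $\bigl(\sum_{i\in I_H}p_i\bigr)\alpha_{I_H}^n$ exhibits the restricted chain $(Y_0,\dots,Y_{N(t)})$ as the Markov chain on $I_H$ with initial law $p^Y_i=p_i/\sum_{j\in I_H}p_j$ and transitions $p^Y_{i,j}=p_{i,j}/\alpha_{I_H}$; (iii) since $E_t$ depends on $N$ only through $N(t)$ once the chain is fixed, the point process and the chain remain independent under $P(\,\cdot\mid E_t)$. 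Thus the pair (Poisson clock, embedded velocity chain) conditioned on $E_t$ has exactly the law of the corresponding pair for $Y^R$.

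The second step reads off the position. On $E_t$ no velocity outside $I_H$ is ever used, so $X(t)=\sum_{h=0}^{H}v_{i_h}T_{(i_h)}(t)$ exactly, and the vector $\bigl(T_{(i_0)}(t),\dots,T_{(i_H)}(t)\bigr)$ is a fixed deterministic functional of the arrival times of $N$ and of $(Y_n)$; by the first step its law under $P(\,\cdot\mid E_t)$ equals the law of the analogous sojourn-time vector of $Y^R$. Applying the linear map $p_R$ gives $p_R\bigl(X(t)\bigr)=\sum_{h=0}^{H}v_{i_h}^R\,T_{(i_h)}(t)$, hence $p_R\bigl(X(t)\bigr)\mid E_t\stackrel{d}{=}Y^R(t)$ as $\mathbb{R}^R$-valued vectors. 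Since $\text{dim}\bigl(\text{Conv}(v_{i_0},\dots,v_{i_H})\bigr)=R$, Lemma~\ref{lemmaProiezione} supplies the orthogonal projection $p_R$ and ensures it restricts to a bijection from $\text{Conv}(v_{i_0}t,\dots,v_{i_H}t)$, which contains $X(t)$ a.s.\ on $E_t$, onto $\text{Conv}(v_{i_0}^R t,\dots,v_{i_H}^R t)$, the support of $Y^R(t)$ — this is precisely Proposition~\ref{proposizioneMotiAVelocitaFinita} applied to $Y^R$. Rewriting the equality in distribution through this bijection, in the coordinate $x^R=p_R(x)$, yields (\ref{proiezioneSingolaritaNonOmogeneo}).

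The main obstacle is the ``thinning-by-conditioning'' of the first step: one must check carefully that conditioning on $E_t$ simultaneously turns $N$ into a Poisson process of rate $\lambda\alpha_{I_H}$ and the embedded chain into the renormalized chain, while preserving their independence. Hypothesis (\ref{ipotesiTransizioniPoisson}) is exactly what is needed here — it makes ``remaining in $I_H$ at a given event'' behave like an independent $\alpha_{I_H}$-thinning, \emph{uniformly in the current velocity}, which is what lets the path-weight sums factor through $\alpha_{I_H}^n$ and keeps the conditional count Poisson. A minor point to flag is that $R$ may be strictly smaller than $H$, so $Y^R$ moves with affinely dependent velocities and need not be minimal; this is harmless, since the geometric bijection used in the last step comes from Lemma~\ref{lemmaProiezione}/Proposition~\ref{proposizioneMotiAVelocitaFinita} with no affine-independence requirement.
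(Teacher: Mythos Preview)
Your proof is correct and takes a genuinely different, more probabilistic route than the paper's.

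The paper's argument is analytic: after invoking Lemma~\ref{lemmaProiezione}/Proposition~\ref{proposizioneMotiAVelocitaFinita} for the projection, it writes down the PDE system of type (\ref{sistemafi}) satisfied by the joint densities $f_i(y,t)=P\bigl\{X^R(t)\in\dif y,\,\bigcap_{j\in I_{H^-}}\{N_j(t)=0\},\,V(t)=v_i^R\bigr\}/\dif y$, computes $P(E_t)=e^{-\Lambda(t)(1-\alpha_{I_H})}\sum_{i\in I_H}p_i$, and substitutes $f_i=g_i\cdot P(E_t)$. The transformed system for the conditional densities $g_i$ is then seen to be exactly the system (\ref{sistemafi}) for $Y^R$, with rate $\lambda\alpha_{I_H}$ and transitions $p_{i,j}/\alpha_{I_H}$; the conclusion follows by identifying $g_i$ with the densities of $Y^R$ (implicitly relying on uniqueness for that system).

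Your approach avoids PDEs entirely via Poisson thinning. Writing $V=Y_{N(\cdot)}$ with $\{Y_n\}$ independent of $N$ (legitimate since the $p_{i,j}$ are time-independent) makes $E_t$ a function of $(Y_0,\dots,Y_{N(t)})$ alone; the identity $P(E_t\mid N(t)=n)=\bigl(\sum_{i\in I_H}p_i\bigr)\alpha_{I_H}^{\,n}$, which is precisely where (\ref{ipotesiTransizioniPoisson}) enters, then shows that conditioning on $E_t$ acts as an independent $\alpha_{I_H}$-thinning of the Poisson clock while simultaneously renormalizing the embedded chain, and preserves their independence. This is more elementary and self-contained, and actually yields more: the entire path $\{p_R(X(s))\}_{0\le s\le t}$ under $P(\,\cdot\mid E_t)$ is distributed as $\{Y^R(s)\}_{0\le s\le t}$, not just the time-$t$ marginal. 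The paper's PDE route, by contrast, fits naturally with the analytic framework of Section~\ref{sezioneMotiGovernatiPoisson} and displays the appearance of the rate $\lambda\alpha_{I_H}$ and transitions $p_{i,j}/\alpha_{I_H}$ directly at the level of the governing equations.
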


Theorem \ref{teoremaSingolaritaMotiPoisson} states that if the probability of keeping a velocity with index in $I_H$ is constant ($\alpha_{I_H}$), then, with respect to the conditional measure $P\Big\{\,\cdot\,|\,\bigcap_{j\in I_{H^-}}\{N_j(t) = 0\}\Big\}$, $X$ is equal in distribution (in terms of finite dimensional distributions) to a $R$-dimensional motion governed by a non-homogeneous Poisson process with rate function $\lambda\alpha_{I_H}$ and suitably scaled transition probabilities, where $R = \text{dim}\Bigl(\text{Conv}(v_{i_0},\dots,v_{i_H})\Bigr)$ (if $R = D$, the identity function fits $p_R$).

\begin{proof}
First, we note that, in light of (\ref{ipotesiTransizioniPoisson}), for $t\ge0$, $P\big\{V(t+\dif t)\in \{v_{i_0},\dots, v_{i_M}\}\,|\,\, V(t)\in \{v_{i_0},\dots, v_{i_M}\},\,N(t, t+\dif t]=1\big\} = \alpha_{I_H}$, and thus
\begin{align}
P\bigg\{\bigcap_{j\in I_{H^-}}\{N_{j}(t) = 0\}\bigg\}& = P\big\{V(0)\in \{v_{i_0},\dots, v_{i_M}\}\big\}\sum_{n=0}^\infty P\{ N(t) = n\} \, \alpha_{I_H}^n\nonumber\\
& = e^{-\Lambda(t)(1-\alpha_{I_H})}\sum_{i\in I_H} p_i. \label{formulaProbabilitaSingolaritaNonOmogeneo}
\end{align}
Now, from Proposition \ref{proposizioneMotiAVelocitaFinita}, Lemma \ref{lemmaProiezione} and the same argument used in point ($b$) of Remark \ref{distribuzioneMotoGenerale}, there exists a projection $p_R:\mathbb{R}^D\longrightarrow\mathbb{R}^R$ such that $X^R(t) = p_R\bigl(X(t)\bigr)$ and $P\Big\{X(t)\in \dif x\,\Big|\,\bigcap_{j\in I_{H^-}}\{N_{j}(t) = 0\}\Big\}/\dif x = P\Big\{X^R(t)\in \dif x^R\,\Big|\,\bigcap_{j\in I_{H^-}}\{N_{j}(t) = 0\}\Big\}/\dif x^R $, with $x\in \overset{\circ}{\text{Conv}}(v_{i_0}t, \dots, v_{i_H}t)$. The $R$-dimensional motion $X^R$ moves with velocities $v_{0}^R =p_R(v_0),\dots, v_M^R = p_R(v_M)$ and its probability functions
$$f_i(y, t)\dif y = P\Big\{X^R(t)\in \dif y,\, \bigcap_{j\in I_{H^-}}\{N_{j}(t) = 0\},\,V_{X^R}(t) = v_{i}^R\Big\},\ \  i\in I_H,$$
with $t\ge0, \,y \in \overset{\circ}{\text{Conv}}\bigr(v_{i_0}^Rt,\dots,v_{i_H}^Rt\bigr)$, satisfy the differential system
\begin{equation}\label{sistemafiSingolarita}
\begin{cases}
\displaystyle\frac{\partial f_i}{\partial t} = -<\nabla_y f_i, v_i^R > -\lambda(t)f_i +\lambda(t)\sum_{j\in I_H} p_{j,i}f_j,\ \ \ i\in I_H,\\
f_i(y,t)\ge0,\ \ \   i\in I_H,\, \forall\ y,t,\\
\displaystyle\int_{\text{Conv}(v_{i_0}^Rt,\dots,v_{i_H}^Rt)} \sum_{i\in I_H} f_i(y,t)\dif y= \int_{\text{Conv}(v_{i_0}^Rt,\dots,v_{i_H}^Rt)}P\bigg\{X^R(t)\in \dif y,\, \bigcap_{j\in I_{H^-}}\{N_{j}(t) = 0\}\bigg\}\\
\displaystyle\hspace{2.8cm}= P\bigg\{\bigcap_{j\in I_{H^-}}\{N_{j}(t) = 0\}\bigg\} - P\bigg\{\bigcup_{i\in I_H} \{N_i(t) = 0\},\, \bigcap_{j\in I_{H^-}} \{N_j(t) = 0\}\bigg\},
\end{cases}
\end{equation}
In light of (\ref{formulaProbabilitaSingolaritaNonOmogeneo}) we consider $f_i(y,t) = g_i(y,t) e^{-\Lambda(t)(1-\alpha_{I_H})}\sum_{h\in I_H} p_h,\ \forall\ i$, i.e. $g_i(y,t)\dif y = P\Big\{X^R(t)\in \dif y,\,V_{X^R}(t) = v_{i}^R\,\Big|\, \bigcap_{j\in I_{H^-}}\{N_{j}(t) = 0\}\Big\}$. System (\ref{sistemafiSingolarita}) becomes 
\begin{equation}\label{sistemafiSingolaritaTrasformato}
\begin{cases}
\displaystyle\frac{\partial g_i}{\partial t} = -<\nabla_y\, g_i, v_i^R > -\lambda(t)\alpha_{I_H}g_i +\lambda(t)\alpha_{I_H}\sum_{j\in I_H} \frac{p_{j,i}}{\alpha_{I_H}}g_j,\ \ \ i\in I_H,\\
g_i(y,t)\ge0,\ \ \  i\in I_H,\,\forall\ y,t,\\
\begin{split}
\displaystyle\int_{\text{Conv}(v_{i_0}^Rt,\dots,v_{i_H}^Rt)} \sum_{i\in I_H} g_i(y,t)\dif y&= \int_{\text{Conv}(v_{i_0}^Rt,\dots,v_{i_H}^Rt)}P\bigg\{X^R(t)\in \dif y\,\Big|\, \bigcap_{j\in I_{H^-}}\{N_{j}(t) = 0\}\bigg\}\\
\displaystyle&= 1 - P\bigg\{\bigcup_{i\in I_H} \{N_i(t) = 0\}\,\Big|\, \bigcap_{j\in I_{H^-}} \{N_j(t) = 0\}\bigg\},
\end{split}
\end{cases}
\end{equation}
which coincides with the system satisfied by the distribution of the position of the stochastic motion $Y^R$ in the statement.
\end{proof}

Theorems 3.1 and 3.2 of Cinque and Orsingher \cite{CO2022} are particular cases of Theorem \ref{teoremaSingolaritaMotiPoisson}.

\appendix

\section{Appendix. Proof of Lemma \ref{lemmaProiezione}}\label{dimLemmaProiezione}

If $\,\text{dim}\Bigl(\text{Conv}(v_0,\dots,v_M)\Bigr) = R$, the matrix $\mathrm{V}_{(k)} = \Bigl[v_h-v_k\Bigr]_{\substack{h=0,\dots,M\\h\not=k}}$ has $R$ linearly independent rows $\forall\ k$. Now, the matrix $\mathrm{V}_{(k)}^R = \Bigl[v_h^R-v_k^R\Bigr]_{\substack{h=0,\dots,M\\h\not=k}}$, obtained by keeping the first $R$ linearly independent rows of $\mathrm{V}_{(k)}$, has rank $R$ and therefore $\text{dim}\Bigl(\text{Conv}\bigl(v_0^R,\dots,v_M^R\bigr)\Bigr) = R$. Thus, $\forall\ l$, also $\mathrm{V}_{(l)}^R = \Bigl[v_h^R-v_l^R\Bigr]_{\substack{h=0,\dots,M\\h\not=l}}$ has rank $R$ and these must be the first $R$ linearly independent rows of $\mathrm{V}_{(l)}$ (if not, by proceeding as above for $k$, we would obtain that the $R$ selected rows were not the first linearly independent ones for $V_{(k)}$ which is a contradiction). 
\\Finally, the second part of the lemma follows from the equivalence of the linear systems
\begin{equation}\label{sistemiPerLemma}
\ \text{ and }\ 
\end{equation} 
where $a=(a_0,\dots,a_M)\in\mathbb{R}^{M+1}$, such that $a_i\in[0,1]\ \forall\ i$ and $\sum_{i=0}^M a_i = 1$, is the unknown variable.
Indeed, for $k=0,\dots,M$, thanks to the constraints on $a$, the systems in (\ref{sistemiPerLemma}) can be written as
$$ x - v_k = \Bigl[v_h-v_k\Bigr]_{h\not=k}a_{(k)}\ \text{ and }\ x^{R} - v_k^{R} = \Bigl[v_h^{R}-v_k^{R}\Bigr]_{h\not=k} a_{(k)},$$
with $a_{(k)}=(a_0,\dots,a_{k-1},a_{k+1},\dots, a_M)$.

\section{Appendix. Complete canonical random motion}

Let $X$ be a complete canonical random motion as in Example \ref{esempioLeggeMotoCanonicoCompleto}.

\subsection{Probability mass of the singularity}\label{masseProabilitaMotoCanonicoCompleto}

Before computing the probability mass of the singularities of the complete uniform random motion, we need to show some useful relationships.

Let $c_1, \dots c_H\in \mathbb{R}, \ H\in\mathbb{N}$ and $\mathcal{C}_h^{\{1, \dots, H\}}$ the combinations of $h$ elements among $\{1, \dots, H\}$, $h=1,\dots,H$. We have that
\begin{equation}\label{primaRelazioneUtile}
\sum_{h=1}^H (-1)^{H-h} \sum_{i\in \mathcal{C}_h^{\{1, \dots, H\}}} (c_{i_1}+\dots c_{i_h})^m =
\begin{cases}
\begin{array}{l l}
 0, &\ m<H,\\
\displaystyle \sum_{\substack{n_1,\dots,n_H\ge1\\n_1+\dots+n_H=m}}  c_1^{n_1}\cdots c_H^{n_H}\binom{m}{n_1,\dots,n_H}, &\ m \ge H.
\end{array}
\end{cases}
\end{equation}
and, with $\beta \in \mathbb{R}$,
\begin{equation}\label{terzaRelazioneUtile}
\sum_{h=1}^H c_h e^{\beta c_h}\prod_{\substack{j = 1\\j\not=h}}^{H} \bigl(e^{\beta c_j}-1\bigr) = \sum_{h=1}^{H} (-1)^{H-h} \sum_{i\in \mathcal{C}_h^{\{1,\dots, H\}}} (c_{i_1}+\dots +c_{i_h})\,e^{\beta(c_{i_1}+\dots+c_{i_h})}.
\end{equation}

To prove (\ref{primaRelazioneUtile}), we denote with $\mathcal{C}_{h,\{i_1,\dots,i_j\}}^{\{1,\dots,H\}} $ the combinations of $h$ elements in $\{1,\dots,H\}$ containing $i_1,\dots,i_j$, with $1\le j\le h\le H$ and suitable $i_1,\dots,i_j$. Then
\begin{align}
\sum_{h=1}^H& (-1)^{H-h} \sum_{i\in \mathcal{C}_h^{\{1, \dots, H\}}} (c_{i_1}+\dots c_{i_h})^m \nonumber\\
&= \sum_{h=1}^H (-1)^{H-h} \sum_{i\in \mathcal{C}_h^{\{1, \dots,H\}}} \sum_{\substack{n_1,\dots,n_h\ge 0\\n_1+\dots+n_h = m} } c_{i_1}^{n_1}\cdots c_{i_h}^{n_h}\binom{m}{n_1,\dots,n_h}\label{passaggioBase}\\
& = \sum_{j=1}^m \sum_{k\in \mathcal{C}_j^{\{1,\dots,H\}}}  \sum_{\substack{m_1,\dots,m_j\ge 1\\m_1+\dots+m_j= m}} c_{k_1}^{m_1}\cdots c_{k_j}^{m_j}\binom{m}{m_1,\dots,m_j} \sum_{h=j}^H (-1)^{H-h}\, \Big| \mathcal{C}_{h,\{k_1,\dots,k_j\}}^{\{1,\dots,H\}}\Big| \label{passaggioParticolare}\\
& = \sum_{j=1}^m \sum_{k\in \mathcal{C}_j^{\{1,\dots,H\}}}  \sum_{\substack{m_1,\dots,m_j\ge 1\\m_1+\dots+m_j= m}} c_{k_1}^{m_1}\cdots c_{k_j}^{m_j}\binom{m}{m_1,\dots,m_j} \,(-1)^{H+j}\sum_{l=0}^{H-j} (-1)^l \binom{H-j}{l} \label{relazioneUtileUltimoPassaggio}\\
&=\begin{cases}
\begin{array}{l l}
 0, &\ m<H,\\
\displaystyle \sum_{\substack{n_1,\dots,n_H\ge1\\n_1+\dots+n_H=m}}  c_1^{n_1}\cdots c_H^{n_H}\binom{m}{n_1,\dots,n_H}, &\ m \ge H.
\end{array}
\end{cases}\nonumber
\end{align}
In fact, in (\ref{relazioneUtileUltimoPassaggio}), the last sum (with index $l$) is equal to $0$ for $j\not= H$ and $1$ for $j = H$.
In step (\ref{passaggioParticolare}) we express (\ref{passaggioBase}) by summing every possible combination of indexes ($k_1,\dots,k_j$) and every possible allocation of exponents ($m_1,\dots,m_j\ge1, m_1+\dots+m_j=m$). Each of these elements, $c_{k_1}^{m_1}\cdots c_{k_j}^{m_j}$, appears one time in the expansion of $(c_{i_1}+\dots+c_{i_h})^m$ for each $i\in \mathcal{C}_{h,\{k_1,\dots,k_j\}}^{\{1,\dots,H\}}$, with $1\le j \le h\le H$, i.e. $\Big| \mathcal{C}_{h,\{k_1,\dots,k_j\}}^{\{1,\dots,H\}}\Big| =\binom{H-j}{h-j}$ times.

To prove (\ref{terzaRelazioneUtile}) we proceed as follows, by denoting with $\mathcal{C}^{\{1,\dots,H\}}_{k,(h)}$ the combinations of $k$ elements not containing $h$,
\begin{align}
\sum_{h=1}^H c_h e^{\beta c_h}\prod_{\substack{j = 1\\j\not=h}}^{H} \bigl(e^{\beta c_j}-1\bigr)& =\sum_{h=1}^H c_h e^{\beta c_h} \sum_{k=0}^{H-1} (-1)^{H-1-k} \sum_{i\in \mathcal{C}^{\{1,\dots,H\}}_{k,(h)}} e^{\beta(c_{i_1}+\dots+c_{i_k})}\nonumber\\
&= \sum_{k=0}^{H-1} (-1)^{H-1-k} \sum_{h=1}^H c_h \sum_{i\in \mathcal{C}^{\{1,\dots,H\}}_{k,(h)}} e^{\beta(c_h+ c_{i_1}+\dots+c_{i_k})}\label{passaggioTerzaRelazioneUtile}\\
& =  \sum_{k=0}^{H-1} (-1)^{H-1-k} \sum_{i\in \mathcal{C}^{\{1,\dots,H\}}_{k+1}} (c_{i_1}+\dots+c_{i_{k+1}})e^{\beta(c_{i_1}+\dots+c_{i_{k+1}})}\nonumber
\end{align}
which coincides with (\ref{terzaRelazioneUtile}). The last step follows by observing that for each combination $i\in \mathcal{C}^{\{1,\dots,H\}}_{k+1}$ the corresponding exponential term appears once for each $h\in i=(i_1,\dots,i_{k+1})$, with $h$ being the index of the second sum of (\ref{passaggioTerzaRelazioneUtile}).
\\

We now compute the probability mass that the motion moves with all and only $H+1$ precise velocities, for $H = 0,\dots,D-1$. Let $I_H = \{i_0,\dots, i_H\} \in \mathcal{C}_{H+1}^{\{0,\dots,D\}}$, then
\begin{align}
P\Big\{&X(t)\in \overset{\circ}{\text{Conv}}(v_{i_0}t, \dots, v_{i_H}t)\Big\} = P\bigg\{ \bigcap_{i\in I_H} \{N_i(t) \ge1\},\,\bigcap_{i\not\in I_H} \{N_i(t) =0\} \bigg\}\label{probabilitaVelocitaEsatte}\\
& = \sum_{n=H}^{\infty} P\{N(t) = n \}  \sum_{\substack{n_0,\dots, n_H\ge1\\n_0+\dots+n_H=n+1}} p_{i_0}^{n_0}\cdots p_{i_H}^{n_H} \binom{n+1}{n_0,\dots, n_H}\nonumber\\
& = \sum_{h=1}^{H+1} (-1)^{H+1-h} \sum_{i\in \mathcal{C}_h^{\{0, \dots, H\}}} \,\sum_{n=H}^{\infty} P\{N(t) = n\}(p_{i_0}+\dots +p_{i_h})^{n+1} \label{usoPrimaRelazioneUtileProbabilita}\\
& = \sum_{h=1}^{H+1} (-1)^{H+1-h} \sum_{i\in \mathcal{C}_h^{\{0, \dots, H\}}} (p_{i_0}+\dots +p_{i_h}) e^{-\lambda t(1-p_{i_0}-\dots- p_{i_h})} \label{connessioneFormulaEsponenziali}  \\
& \ \ \ - e^{-\lambda t} \sum_{n=0}^{H-1} \frac{(\lambda t)^n}{n!} \, \sum_{h=1}^{H+1} (-1)^{H+1-h} \sum_{i\in \mathcal{C}_h^{\{0, \dots, H\}}} (p_{i_0}+\dots +p_{i_h})^{n+1} \label{termineNulloMassaSingolarita} \\
& = (p_{0}+\dots +p_{H})\, e^{-\lambda t(1-p_{0}-\dots- p_H)} - \sum_{h=1}^{H} (-1)^{H-h} \sum_{i\in \mathcal{C}_h^{\{0, \dots, H\}}} (p_{i_0}+\dots +p_{i_h})\, e^{-\lambda t(1-p_{i_0}-\dots- p_{i_h})}  \nonumber\\
& = P\bigg\{ \bigcap_{i\not\in I_H} \{N_i(t) =0\} \bigg\} - P\bigg\{ \bigcup_{i\in I_H} \{N_i(t) =0\},\,\bigcap_{i\not\in I_H} \{N_i(t) =0\} \bigg\} \nonumber
\end{align}
where we used the second equality of (\ref{primaRelazioneUtile}) to derive (\ref{usoPrimaRelazioneUtileProbabilita}). Thanks to the first case of (\ref{primaRelazioneUtile}), it is easy to see that the term (\ref{termineNulloMassaSingolarita}) is $0$ and thus, by means of (\ref{terzaRelazioneUtile}), we also obtain the equivalence between (\ref{connessioneFormulaEsponenziali}) and the probability mass (\ref{massaSingolaritaMotoCanonicoCompleto}).
\\Note that if the motion is \textit{uniform}, i.e. $p_0 = \dots = p_D = 1/(D+1)$, probability (\ref{probabilitaVelocitaEsatte}) reduces to $ \frac{H+1}{D+1} e^{\frac{-\lambda t D}{D+1}} \bigl(e^{\frac{-\lambda t }{D+1}}-1\bigr)^H$ (see also (\ref{massaSingolaritaMotoCanonicoCompleto})).

In light of (\ref{connessioneFormulaEsponenziali}), the probability that the motion moves with exactly $H+1$ velocities in the time interval $[0,t]$ is 
\begin{align}
P&\Bigg\{\bigcup_{I \in \mathcal{C}_{H+1}^{\{0,\dots, D\}}} \bigg \{\bigcap_{i\in I}\{N_i(t)\ge1\},\, \bigcap_{i\not\in I}\{ N_i(t) = 0\} \bigg\} \Bigg\} =\sum_{I \in \mathcal{C}_{H+1}^{\{0,\dots, D\}}} P\bigg\{ \bigcap_{i\in I} \{N_i(t) \ge1\},\,\bigcap_{i\not\in I} \{N_i(t) =0\} \bigg\}\nonumber\\
&= \sum_{h=1}^{H+1} (-1)^{H+1-h} \sum_{I \in \mathcal{C}_{H+1}^{\{0,\dots, D\}}}\sum_{i\in \mathcal{C}_h^{I}} (p_{i_0}+\dots +p_{i_h}) \,e^{-\lambda t(1-p_{i_0}-\dots- p_{i_h})}\nonumber\\
& = \sum_{h=1}^{H+1} (-1)^{H+1-h} \binom{D+1-h}{H+1-h}\sum_{i \in \mathcal{C}_{h}^{\{0,\dots, D\}}} (p_{i_0}+\dots +p_{i_h}) \,e^{-\lambda t(1-p_{i_0}-\dots- p_{i_h})}\label{massaH+1Velocita}
\end{align}
where in the last step we observe that each combination $i \in \mathcal{C}_{h}^{\{0,\dots, D\}}$ appears in $ \binom{D+1-h}{H+1-h}$ combinations in $\mathcal{C}_{H+1}^{\{0,\dots, D\}}$ (i.e. all those which contain the $h$ elements in $i$).

Finally, by using expression (\ref{massaH+1Velocita}),
\begin{align}
&P\big\{X(t)\in \partial \text{Supp}\bigl(X(t)\bigr)\big\} = P\bigg \{\bigcup_{h=0}^D\{ N_h(t) = 0\} \bigg\} \nonumber\\
& =  P\Bigg\{\,\bigcup_{H=0}^{D-1}\,\bigcup_{I \in \mathcal{C}_{H+1}^{\{0,\dots, D\}}} \bigg \{\bigcap_{i\in I}\{N_i(t)\ge1\},\, \bigcap_{i\not\in I}\{ N_i(t) = 0\} \bigg\} \Bigg\}\nonumber \\
&=\sum_{H=0}^{D-1} \sum_{h=1}^{H+1} (-1)^{H+1-h} \binom{D+1-h}{H+1-h}\sum_{i \in \mathcal{C}_{h}^{\{0,\dots, D\}}} (p_{i_0}+\dots +p_{i_h}) \,e^{-\lambda t(1-p_{i_0}-\dots- p_{i_h})}\nonumber\\
& = \sum_{h=1}^{D}  \sum_{i \in \mathcal{C}_{h}^{\{0,\dots, D\}}} (p_{i_0}+\dots +p_{i_h}) \,e^{-\lambda t(1-p_{i_0}-\dots- p_{i_h})} \sum_{H=h-1}^{D-1}  (-1)^{H+1-h} \binom{D+1-h}{H+1-h}\nonumber\\
& = \sum_{h=1}^{D}  \sum_{i \in \mathcal{C}_{h}^{\{0,\dots, D\}}} (p_{i_0}+\dots +p_{i_h}) \,e^{-\lambda t(1-p_{i_0}-\dots- p_{i_h})} \Bigl(0-(-1)^{D+1-h}\Bigr)\nonumber\\
& = \sum_{h=1}^{D} (-1)^{D-h} \sum_{i \in \mathcal{C}_{h}^{\{0,\dots, D\}}} (p_{i_0}+\dots +p_{i_h}) \,e^{-\lambda t(1-p_{i_0}-\dots- p_{i_h})}.\label{connessioneFormulaEsponenzialiBordo}
\end{align}
Note that, with (\ref{connessioneFormulaEsponenzialiBordo}) at hand, by also keeping in mind (\ref{terzaRelazioneUtile}) and that $p_0+\dots+p_D = 1$, we obtain the last step in probability (\ref{integraleLeggeMotoMinimaleCanonicoCompleto})
\\

It is interesting to observe that if the point process governing $X$ is a non-homogeneous Poisson process with rate function $\lambda:[0,\infty)\longrightarrow [0,\infty)$ such that $\Lambda(t) = \int_0^t \lambda(s)\dif s <\infty\ \forall \ t$, then, the above probability masses hold with $\Lambda(t)$ replacing $\lambda t$.

\subsection{PDE governing the absolutely continuous component}\label{appendicePDEMotoMinimaleCompleto}

From the differential system (\ref{sistemaMotoCompletoMinimale}) we obtain (\ref{relazioneDifferenzialeMotoCompletoMinimale}) through the following iterative argument. 
\\First, we consider $w_1 = f_0 + f_1$ and we easily obtain
\begin{align}\label{derivataPrimatw1}
\frac{\partial w_1}{\partial t} &= \lambda(p_0+p_1-1)w_1- \frac{\partial f_1}{\partial x_1} + \lambda(p_0+p_1)\sum_{j=2}^D f_j\nonumber\\
& = A w_1 + B f_1 + C \sum_{j=2}^D f_j,
\end{align}
with $A,B,C$ suitable operators. Now, we rewrite the equations of (\ref{sistemaMotoCompletoMinimale}) by means of the operators $E_i = \Bigl(\frac{\partial }{\partial x_i} + \lambda\Bigr)$ and $G_i = \lambda p_i$,
\begin{align}\label{rappresentazioneSistemaConOperatori}
\frac{\partial f_i}{\partial t} = -E_i f_i + G_i \sum_{j=0}^i f_j + G_i \sum_{j=i+1}^D f_j, \ \ i=1,\dots D.
\end{align}

By keeping in mind (\ref{derivataPrimatw1}), (\ref{rappresentazioneSistemaConOperatori}) (for $i=1$) and the exchangeability of the differential operators, we can express the second-order time derivative of $w_1$ in terms of $w_1$ and $\sum_{j=2}^D f_j$, 
\begin{align}
 \frac{\partial^2 w_1}{\partial t^2} & =  A \frac{\partial w_1}{\partial t} + B \frac{\partial f_1}{\partial t} + C \frac{\partial}{\partial t}  \sum_{j=2}^D f_j \nonumber\\
& = A \frac{\partial w_1}{\partial t}   + B \Biggl(-E_1 f_1 + G_1 w_1 + G_1  \sum_{j=2}^D f_j \Biggr) + C \frac{\partial}{\partial t} \sum_{j=2}^D f_j\nonumber \\
& = \Biggl( A \frac{\partial }{\partial t} + B G_1  \Biggr) w_1 -E_1 \Biggl(\frac{\partial w_1}{\partial t} - A w_1 -C \sum_{j=2}^D f_j \Biggr) + \Biggl(B G_1 + C \frac{\partial}{\partial t}  \Biggr) \sum_{j=2}^D f_j\nonumber\\
& =  \Biggl( (A-E_1) \frac{\partial }{\partial t} + B G_1 +E_1 A \Biggr) w_1 + \Biggl(B G_1 + C\Bigl(\frac{\partial}{\partial t} +E_1\Bigr)  \Biggr) \sum_{j=2}^D f_j\nonumber \\
&=\Biggl(\lambda^2(p_0+p_1-1) + \lambda(p_0+p_1-2)\frac{\partial}{\partial t}  + \lambda(p_0-1) \frac{\partial}{\partial x_1} - \frac{\partial^2}{\partial t \partial x_1}\Biggr)w_1 \nonumber \\
& \ \ \ + \Biggl(\lambda (p_0+p_1)\Bigl(\frac{\partial}{\partial t} + \lambda\Bigr)+ \lambda p_0 \frac{\partial}{\partial x_1}\Biggr)\sum_{j=2}^D f_j \nonumber\\
& = \Lambda_1 w_1 + \Gamma_1 \sum_{j=2}^D f_j.\label{formulaAB1}
\end{align}

By iterating the above argument, at the $n-th$ step, $n=2,\dots, D$, we have, with $w_{n} = w_{n-1}+f_n$ (meaning that $w_i = \sum_{j=0}^i f_j,\ i = 1,\dots,D$),
\begin{equation}\label{sistemaIterazionen}
\begin{cases}
\displaystyle\frac{\partial^n w_{n-1}}{\partial t^n }  = \Lambda_{n-1} w_{n-1} + \Gamma_{n-1} \sum_{j=n}^D f_j \ \implies\ \Bigl(\frac{\partial^n }{\partial t^n}-\Lambda_{n-1}\Bigr)w_{n-1} = \Gamma_{n-1} f_n + \Gamma_{n-1}  \sum_{j=n+1}^D f_j\\
\displaystyle\Bigl(\frac{\partial }{\partial t} + E_n\Bigr) f_n = G_n w_n + G_n\sum_{j=n+1}^D f_j\\
\displaystyle\Bigl(\frac{\partial }{\partial t} +E_i\Bigr) f_i= G_i w_i + G_i\sum_{j=i+1}^D f_j,\ \ i=n+1,\dots, D.
\end{cases}
\end{equation}
Thus, by using the first two equations of (\ref{sistemaIterazionen}),
\begin{align}
&\Biggl(\frac{\partial^n }{\partial t^n}-\Lambda_{n-1}\Biggr) \Biggl(\frac{\partial }{\partial t} + E_n\Biggr) w_n  \label{formulaIterazionenDerivatan+1} \\
& = \Biggl(\frac{\partial^n }{\partial t^n}-\Lambda_{n-1}\Biggr) G_n w_n + \Gamma_{n-1}\Biggl(\frac{\partial }{\partial t} + E_n\Biggr)f_n +\Biggl[   \Biggl(\frac{\partial }{\partial t} + E_n\Biggr)\Gamma_{n-1} +  \Biggl(\frac{\partial^n }{\partial t^n}-\Lambda_{n-1}\Biggr) G_n\Biggr ]\sum_{j=n+1}^D f_j  \nonumber\\
& = \Biggl(\frac{\partial^n }{\partial t^n}-\Lambda_{n-1} + \Gamma_{n-1}\Biggr) G_n w_n + \Biggl[   \Biggl(\frac{\partial }{\partial t} + E_n\Biggr)\Gamma_{n-1} +  \Biggl(\frac{\partial^n }{\partial t^n}-\Lambda_{n-1} +\Gamma_{n-1}\Biggr) G_n\Biggr ]\sum_{j=n+1}^D f_j. \nonumber
\end{align}
Hence, by reordering the terms in (\ref{formulaIterazionenDerivatan+1}), for $n = 2,\dots, D$, we have that 
$$\Lambda_n = \Biggl(\frac{\partial }{\partial t} + \frac{\partial }{\partial x_n} +\lambda\Biggr)\Lambda_{n-1} + \lambda (p_0-1)\frac{\partial^n }{\partial t^n} + \lambda p_0(\Gamma_{n-1}-\Lambda_{n-1}) - \frac{\partial^{n+1} }{\partial t^{n}\partial x_n}$$
and 
$$\Gamma_n = \Biggl(\frac{\partial }{\partial t} + \frac{\partial }{\partial x_n} +\lambda\Biggr)\Gamma_{n-1} + \lambda p_n\Bigl(\frac{\partial^n }{\partial t^n} +\Gamma_{n-1}- \Lambda_{n-1} \Bigr),$$
with $\Lambda_1, \Gamma_1$ given in (\ref{formulaAB1}).

The interested reader can check (for instance by induction) that the operators $\Lambda_n$ and $\Gamma_n$ are such that
\begin{align}
&\frac{\partial^{n+1} w_n}{\partial t^{n+1}}  = \Lambda_n w_w + \Gamma_n \sum_{j=n+1}^D f_j\label{formulaDerivatawn}\\
& = \Biggl(\,\sum_{k=0}^n \sum_{i\in \mathcal{C}^{\{1,\dots, n\}}_k} \sum_{h = 0}^{n-k} \lambda^{n+1-(h+k)} \biggl[\binom{n-k}{h}\Bigl(p_0 + \sum_{\substack{j = 1\\j\not \in  i}}^n p_j\Bigr) - \binom{n+1-k}{h} \biggr] \frac{\partial ^{h+k}}{\partial t^h \partial x_{i_1}\cdots x_{i_k}}\label{primaSomma} \\
& \ \ \ -\,\sum_{k=1}^{n} \sum_{i\in \mathcal{C}^{\{1,\dots, n\}}_k}  \frac{\partial ^{n+1}}{\partial t^{n+1-k} \partial x_{i_1}\cdots x_{i_k}} \,\Biggr) w_n  \label{secondaSomma}\\
& \ \ \ + \,\sum_{k=0}^n \sum_{i\in \mathcal{C}^{\{1,\dots, n\}}_k} \sum_{h = 0}^{n-k} \lambda^{n+1-(h+k)} \binom{n-k}{h}\Bigl(p_0 + \sum_{\substack{j = 1\\j\not \in  i}}^n p_j\Bigr)  \frac{\partial ^{h+k}}{\partial t^h \partial x_{i_1}\cdots x_{i_k}}  \sum_{j=n+1}^D f_j. \label{terzaSomma}
\end{align}

Finally, for $n=D$ and $w_D = \sum_{j=0}^D f_j = p$, that is the probability density of the position of the motion, formula (\ref{formulaDerivatawn}) reduces to (\ref{relazioneDifferenzialeMotoCompletoMinimale}); indeed, the term in (\ref{terzaSomma}) becomes $0$, the $(D+1)$-th order time derivative can be included in the sum in (\ref{secondaSomma}) as $k=0$ and this new sum becomes the term with $h = D+1-k$ in (\ref{primaSomma}).








\footnotesize{

}

\end{document}